\documentclass[11pt]{article}
\usepackage{mathrsfs}
\usepackage{amsmath}
\usepackage{amssymb}
\usepackage{graphicx}
\usepackage{epic}
\usepackage{color}
\usepackage{cases}
\usepackage{pst-poly}  
\usepackage{pst-plot}  
\usepackage{pst-poly}  
\usepackage{CJK}

\usepackage{verbatim}

\renewcommand{\paragraph}{\roman{paragraph}}

\usepackage{bm}
\usepackage{hyperref}
\usepackage{tikz}
\usetikzlibrary{arrows,shapes,positioning}
\usetikzlibrary{decorations.markings}
\tikzstyle arrowstyle=[scale=1]
\tikzstyle directed=[postaction={decorate,decoration={markings, mark=at position .65 with {\arrow[arrowstyle]{stealth}}}}]
\tikzstyle reverse directed=[postaction={decorate,decoration={markings, mark=at position .65 with {\arrowreversed[arrowstyle]{stealth};}}}]

\newtheorem{theorem}{Theorem}[section]
\newtheorem{corollary}[theorem]{Corollary}

\newtheorem{lemma}[theorem]{Lemma}

\newtheorem{proposition}[theorem]{Proposition}

\newenvironment{proof}{\noindent {\bf Proof.}}{\rule{3mm}{3mm}\par\medskip}

\begin{document}

\title{Counting oriented spanning trees in generalized join  digraphs\thanks{The work was supported by National Natural Science Foundation of China (Grant No. 12271251). \newline $\dag$ Corresponding author.  E-mail addresses: xush0928@163.com (S.  Xu), kexxu1221@126.com
(K.  Xu).}}\author{{Shaohan Xu, Kexiang Xu$^{\dag}$}\\\\{\small School of Mathematics, Nanjing University of Aeronautics and Astronautics,}\\
{\small Nanjing, Jiangsu 211016, PR China}}

\maketitle
\begin{abstract}
Let $G$ be a digraph with vertex set $\{1,2,...,n\}$ and $H_{1},H_{2},...,H_{n}$ be $n$  digraphs.   The generalized join digraph $\overrightarrow{G}=G[H_{1},H_{2},...,H_{n}]$  is a  digraph obtained from $G$ by replacing each vertex $i$ with $H_{i}$ and for any $u\in V(H_{i})$  and $v\in V(H_{j})$, $(u,v)\in E(\overrightarrow{G})$ if and only if $(i,j)\in E(G)$. In this paper we express the number of  oriented spanning trees in $\overrightarrow{G}$  in terms of Laplacian eigenvalues of $H_{1},H_{2},...,H_{n}$ and oriented spanning trees of $G$. Furthermore, we consider the number of oriented spanning trees  with a  fixed root in $\overrightarrow{G}$. First, we   introduce the biclique-directed star  transformation formula for counting oriented spanning trees  with a fixed root in digraphs. Using it, we give the formula for the total number of oriented spanning trees  with  roots in a certain $H_{i}$ $(1\leq i \leq n)$ of $\overrightarrow{G}$ in terms of Laplacian eigenvalues of $H_{1},H_{2},...,H_{n}$ and oriented spanning trees of $G$. As applications, when each $H_{i}$ is  a given digraph, the enumerative formulas for oriented  spanning trees with a fixed root  of $\overrightarrow{G}$   are derived from our work.
\end{abstract}

{\bf Keywords:} generalized join digraph; spanning tree; Laplacian matrix; biclique-directed star  transformation

\section{Introduction}
Let $G$ be a weighted digraph with vertex set $V(G)$, edge set $E(G)$ and each edge $e=(i,j)\in E(G)$ is weighted by an indeterminate  $\omega_{e}(G)=\omega_{ij}(G)$.  The notation $e=(i,j)\in E(G)$ means there exists a directed edge from tail vertex $i=t(e)$ to head vertex $j=h(e)$. The word ``weighted" is omitted when $\omega_{e}(G)=1$ for each $e\in E(G)$.
For $u\in V(G)$, we denote by $N_{G}^{+}(u)=\{v\in V(G):(u,v)\in E(G)\}$ and
$N_{G}^{-}(u)=\{v\in V(G):(v,u)\in E(G)\}$ the set of {\it out-neighbors} and  {\it in-neighbors}, respectively.  The {\it outdegree} and {\it indegree} of a vertex $u$ in $G$ are denoted by $d_{u}^{+}(G)=|N_{G}^{+}(u)|$ and $d_{u}^{-}(G)=|N_{G}^{-}(u)|$, respectively.

An {\it oriented spanning tree} \cite{C2,L1} of a weighted digraph $G$ is a subtree containing all vertices of $G$, in which one vertex, the root, has outdegree $0$, and every other vertex has outdegree $1$.  Let  $\mathcal{T}(G)$ be the set of all oriented spanning trees of $G$ and $\mathcal{T}_{v}(G)$ be the set of oriented spanning trees of $G$ with root $v$. Let $\{\omega_{e}(G)\}_{e\in E(G)}$ be  indeterminates on  $E(G)$. An {\it oriented  spanning tree enumerator} of $G$ is defined as
$$ t(G,\omega)=\sum_{T\in \mathcal{T}(G)} \prod_{e\in E(T)} \omega_{e}(G)$$
and an {\it oriented  spanning tree enumerator rooted at $v$} of $G$ is defined as
$$ t_{v}(G,\omega)=\sum_{T\in \mathcal{T}_{v}(G)} \prod_{e\in E(T)} \omega_{e}(G).$$
 Let $t(G)=|\mathcal{T}(G)|$ denote the number of oriented spanning trees of $G$ and $t_{v}(G)=|\mathcal{T}_{v}(G)|$ denote the number of oriented spanning  trees of $G$ with root $v$. If $\omega_{e}(G)=1$ for each  $e\in E(G)$, then $t(G,\omega)=t(G)$ and $t_{v}(G,\omega)=t_{v}(G)$.

Let $\{\omega_{i}(G)\}_{i\in V(G)}$ be indeterminates on $V(G)$. If each edge $(u,v)\in E(G)$ has weight $\omega_{uv}(G)=\omega_{v}(G)$, then we say that {\it the weights of $G$ are induced by $\{\omega_{i}(G)\}_{i\in V(G)}$}. The weighted {\it line digraph} $\mathcal{L}(G)$ of a weighted digraph $G$ has vertex set $V(\mathcal{L}(G))=E(G)$, and there exists a directed edge from $e$ to $f$ in $\mathcal{L}(G)$ if and only if $h(e)=t(f)$, and  weights of  $\mathcal{L}(G)$ are induced by indeterminates $\{\omega_{e}(G)\}_{e\in E(G)}$.

The problem related to the enumeration of spanning trees in a graph is  an important and popular topic in mathematics, physics, computer science, and so on, and has been studied extensively for a long time. See some relevant results in  \cite{C1,D1,R2,xxx,G23,G1,K1,Y3,Y1,L10,F9}. However, some results about counting oriented spanning trees in   digraphs are sparse. Some scholars have  investigated the enumeration of oriented spanning trees in digraphs and given some nice results.
For line digraphs, Knuth \cite{K2} obtained the  formula for the number of oriented spanning trees with a fixed root. Combinatorial and bijective proofs of Knuth's formula are given in \cite{O1} and \cite{B4}, respectively. In 2011,   Levine  \cite{L1} obtained the following formulas for two oriented spanning tree enumerators in weighted line digraphs, which is a generalization of Knuth's result. Independently, Sato \cite{S1} presented a new proof for two of Levine's theorems by using the generalized Matrix-Tree theorem.

\begin{theorem}{\rm (\cite{L1})}\label{x1}
Let $G$ be a weighted digraph such that $d_{u}^{-}(G)>0$ for each $u\in V(G)$. Then
$$t(\mathcal{L}(G),\omega)=t(G,\omega)\prod_{v\in V(G)}d_{v}(G)^{d_{v}^{-}(G)-1},$$
where  $d_{v}(G)=\sum_{t(e)=v}\omega_{e}(G)$.
\end{theorem}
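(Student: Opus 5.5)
We plan to prove Levine's formula through the directed Matrix-Tree theorem, applied to one root at a time, and then sum over roots. Throughout, $L(G)=D(G)-A(G)$ denotes the out-degree Laplacian. Here $D(G)=\mathrm{diag}(d_v(G))$ with $d_v(G)=\sum_{t(e)=v}\omega_e(G)$, and $A(G)_{uv}=\sum_{t(e)=u,h(e)=v}\omega_e(G)$. With this convention, the principal minor of $L(G)$ obtained by deleting row and column $v$ equals $t_v(G,\omega)$. Summing over $v$ gives $t(G,\omega)=e_{n-1}(L(G))$, the $(n-1)$-st elementary symmetric function of the eigenvalues, i.e. the coefficient of $(-1)^{n-1}x$ in $\det(xI-L(G))$.

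\textbf{Factoring the line digraph Laplacian.} Let $m=|E(G)|$. Define the $m\times n$ matrix $S$ with $S_{e,v}=1$ if $h(e)=v$ and $0$ otherwise, and the $n\times m$ matrix $R$ with $R_{v,f}=\omega_f(G)$ if $t(f)=v$ and $0$ otherwise. In $\mathcal{L}(G)$ the edge $e\to f$ has weight $\omega_f(G)$, so $A(\mathcal{L}(G))=SR$. Also, the out-degree of $e$ in $\mathcal{L}(G)$ is $\sum_{t(f)=h(e)}\omega_f=d_{h(e)}(G)$. Hence $D(\mathcal{L}(G))=\mathrm{diag}(d_{h(e)})=S\,D(G)$ "spread out", meaning $D(\mathcal{L}(G))S=SD(G)$. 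We also have $RS=A(G)$ and $RS$ restricted appropriately. The key identity is
$$\det\bigl(xI_m-D_{\mathcal L}+SR\bigr)=\det(xI_m-D_{\mathcal L})\det\bigl(I_n+R(xI_m-D_{\mathcal L})^{-1}S\bigr).$$
Because $D_{\mathcal L}$ acts on the edges entering $v$ as the scalar $d_v$, we get $R(xI-D_{\mathcal L})^{-1}S=\mathrm{diag}\bigl((x-d_v)^{-1}\bigr)A(G)$. Therefore
$$\det(xI-L(\mathcal{L}(G)))=\prod_v (x-d_v)^{d_v^--1}\cdot\det\bigl(xI_n-D(G)+A(G)\bigr)=\prod_v (x-d_v)^{d_v^--1}\det(xI-L(G)).$$
This step uses $d_v^-\ge 1$, so that every factor $(x-d_v)$ can be pulled out.

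\textbf{Extracting the linear coefficient.} Since $L(G)$ has zero row sums, $\det(xI-L(G))=x\,p(x)$ with $p(0)=(-1)^{n-1}t(G,\omega)$. Evaluating the linear coefficient on both sides gives
$$t(\mathcal{L}(G),\omega)(-1)^{m-1}=(-1)^{n-1}t(G,\omega)\prod_v(-d_v)^{d_v^--1}.$$
The signs cancel because $\sum_v(d_v^--1)=m-n$. This yields exactly the formula of Theorem~\ref{x1}.

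The main obstacle is the matrix-determinant-lemma step. It must be justified as an identity of polynomials, either over the field of rational functions in the indeterminates and $x$, or equivalently by using $\det(I_m+UV)=\det(I_n+VU)$. The computation of $R(xI-D_{\mathcal L})^{-1}S$ also needs care: the entry $(u,v)$ is $\sum_{e:t(e)=u,h(e)=v}\omega_e/(x-d_v)$, so the diagonal factor multiplies on the right, $A(G)\,\mathrm{diag}((x-d_v)^{-1})$. After multiplying by $\mathrm{diag}(x-d_v)$, the determinant is $\det(\mathrm{diag}(x-d_v)+A(G))$, up to the corrected sign convention $L=D-A$ with $A$ entering with a plus sign in $xI-L$. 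This is the bookkeeping I would double-check.
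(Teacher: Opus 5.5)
The paper does not prove this statement. It is quoted from Levine's work; Sato's alternative proof and the extension that drops the in-degree hypothesis are only mentioned. So there is no in-paper argument to compare against, and your proposal has to stand on its own.

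It does stand: it is correct and complete. The ingredients are the factorization $A(\mathcal{L}(G))=SR$, $A(G)=RS$, the intertwining $D(\mathcal{L}(G))S=SD(G)$, Sylvester's identity over the field of rational functions in $x$ and the weights, and reading off the coefficient of $x$ via Lemma~\ref{x7}, since the sum of the principal $(n-1)$-minors is $t(G,\omega)$. Together they give
\[
\det\bigl(xI_m-L_{\mathcal{L}(G)}\bigr)=\prod_{v\in V(G)}(x-d_v)^{d_v^{-}-1}\det\bigl(xI_n-L_G\bigr),
\]
and your sign count $\sum_v(d_v^--1)=m-n$ is right.

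The point you flagged is harmless. Your own entrywise computation shows $R(xI_m-D(\mathcal{L}(G)))^{-1}S=A(G)\Delta^{-1}$ with $\Delta=xI_n-D(G)$, so your first formula had the diagonal factor on the wrong side. But $\det(I_n+A(G)\Delta^{-1})=\det(\Delta+A(G))\det(\Delta)^{-1}=\det(I_n+\Delta^{-1}A(G))$, so the resulting determinant is the same either way.

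Relative to the machinery the paper does use, your route gives the full Laplacian characteristic polynomial of $\mathcal{L}(G)$, not just the tree count. Read as an identity of rational functions, it also shows the in-degree hypothesis only serves to make the right-hand side a polynomial before setting $x=0$.

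What it does not give by itself is the rooted formula (Theorem~\ref{x2}, or Theorem~\ref{xb} as used in Section~4). The linear coefficient only sees the sum of the principal minors. Your determinant-lemma step is really a Schur complement of $\left(\begin{smallmatrix} xI_m-D(\mathcal{L}(G)) & -S\\ R & I_n\end{smallmatrix}\right)$ taken in two ways. That is the same mechanism as Proposition~\ref{x9} and the proof of Theorem~\ref{x13}, where eliminating the vertex block of an edge--vertex digraph produces a line-digraph Laplacian.

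Applied instead to the digraph on $E(G)\cup V(G)$ with Laplacian $\left(\begin{smallmatrix} I_m & -S\\ -R & D(G)\end{smallmatrix}\right)$, when every $d_v\neq 0$, that mechanism controls individual minors. Proposition~\ref{x9} together with the adjugate argument of Lemma~\ref{x11} then yields the rooted count $t_e(\mathcal{L}(G),\omega)$, and summing over $e$ recovers the statement.
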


\begin{theorem}{\rm (\cite{L1})}\label{x2}
Let $G$ be a weighted digraph such that $d_{u}^{-}(G)>0$ for each $u\in V(G)$.  For any edge $e^{*}=(u^{*},v^{*})$ of $G$ satisfying $d_{v^{*}}^{-}(G)\geq2$, we have
$$t_{e^{*}}(\mathcal{L}(G),\omega)=\omega_{e^{*}}(G)t_{u^{*}}(G,\omega)d_{v^{*}}(G)^{d_{v^{*}}^{-}(G)-2}\prod_{v^{*}\neq v\in V(G)}d_{v}(G)^{d_{v}^{-}(G)-1}.$$
\end{theorem}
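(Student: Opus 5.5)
We derive Theorem~\ref{x2} from Theorem~\ref{x1} with a local modification that turns rooted trees at $e^{*}$ into unrooted trees of a slightly altered line digraph.

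Throughout we use the weighted Matrix-Tree theorem. With the convention that a tree edge points toward the root, $t_{v}(G,\omega)=\det L_{v}$, where $L=D-A$ has $D=\mathrm{diag}(d_u(G))$ (outgoing weight sums), $A_{uw}=\omega_{uw}$, and $L_{v}$ deletes row and column $v$. In $\mathcal{L}(G)$, the out-neighbours of a vertex $e$ are the edges $f$ with $t(f)=h(e)$, each of weight $\omega_f$. Hence the out-degree of $e$ in $\mathcal{L}(G)$ is $d_{h(e)}(G)$.

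\textbf{Step 1: reduce to a general-root statement.} The quantity $t_{e^{*}}(\mathcal{L}(G),\omega)$ is the minor of $L(\mathcal{L}(G))$ at $e^{*}$. We view $t(\mathcal{L}(G),\omega)=\sum_{e}t_{e}(\mathcal{L}(G),\omega)$ as a polynomial and extract the $e^{*}$ term by a substitution. One option is to set all weights $\omega_f$ with $t(f)=v^{*}$ to zero except for a marker. The cleaner option is a direct linear-algebra argument: factor $L(\mathcal{L}(G))$ through the incidence structure. Write $L(\mathcal{L}(G))=\Delta_h-H\,\Omega\,T^{\top}$, where $H$ is the $E\times V$ head-incidence matrix, $T$ is the tail-incidence matrix, $\Omega$ is the diagonal matrix of edge weights, and $\Delta_h=\mathrm{diag}(d_{h(e)})$. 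Note that $\Omega T^{\top}H$ is exactly the adjacency matrix of $G$ with $D$ on the appropriate side.

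\textbf{Step 2: determinant identity.} Delete row and column $e^{*}$, and apply the matrix determinant lemma $\det(X-UW)=\det X\,\det(I-WX^{-1}U)$ with $X$ the diagonal part. The diagonal part contributes
\[
\prod_{e\neq e^{*}}d_{h(e)} \;=\; d_{v^{*}}^{\,d_{v^{*}}^{-}-1}\prod_{v\neq v^{*}}d_{v}^{\,d_{v}^{-}}.
\]
The $V\times V$ correction $I-WX^{-1}U$ becomes $D^{-1}$ times a modified Laplacian of $G$. In that Laplacian, the edge $e^{*}$ is removed from the out-row of $u^{*}$ while the diagonal entry $d_{u^{*}}$ is kept. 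Because $d^{-}_{v^{*}}\ge 2$, vertex $v^{*}$ still has in-edges, so $X$ restricted to $E\setminus\{e^{*}\}$ remains invertible and the exponent bookkeeping is consistent. This is where the hypothesis is used.

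\textbf{Step 3: evaluate the modified Laplacian determinant.} We need $\det(L(G)+\omega_{e^{*}}E_{u^{*}v^{*}})$, where $E_{u^{*}v^{*}}$ is the matrix unit. Since $\det L(G)=0$, multilinearity in row $u^{*}$ gives
\[
\det\bigl(L(G)+\omega_{e^{*}}E_{u^{*}v^{*}}\bigr)=\omega_{e^{*}}\cdot(\text{cofactor }(u^{*},v^{*})\text{ of }L(G)).
\]
Because the row sums of $L(G)$ vanish, all cofactors in a given column are equal, and the column-$v^{*}$ cofactors equal $\det L_{u^{*}}=t_{u^{*}}(G,\omega)$ up to the standard identification. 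Dividing by $\det D=\prod_v d_v$ lowers each exponent by one, and collecting factors yields
\[
\omega_{e^{*}}\,t_{u^{*}}(G,\omega)\,d_{v^{*}}^{\,d_{v^{*}}^{-}-2}\prod_{v\neq v^{*}}d_{v}^{\,d_{v}^{-}-1}.
\]

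\textbf{Main obstacle.} The delicate part is Step 2: setting up the factorization correctly after deleting $e^{*}$, and checking signs and which cofactor appears, so that the result is $t_{u^{*}}$ rather than $t_{v^{*}}$. As a sanity check, we will verify the formula on a directed cycle with one doubled edge. If the signs cause trouble, the fallback is Levine's combinatorial route: apply Theorem~\ref{x1} to $G$ with an auxiliary sink attached, treating rooted trees at $e^{*}$ as unrooted trees in which $e^{*}$ has its out-edges removed.
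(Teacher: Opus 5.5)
The paper does not prove this statement; it quotes it from Levine \cite{L1}. So your argument has to stand on its own. Its core is sound, and it is the same Schur-complement mechanism as Proposition~\ref{x9} and Lemma~\ref{x11}: the in- and out-edges at each $v$ form a biclique of $\mathcal{L}(G)$ with $\omega(Q_v)=d_v(G)$.

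Your products are in an illegal order; $H\Omega T^{\top}$ and $\Omega T^{\top}H$ are undefined. The correct forms are $L_{\mathcal{L}(G)}=\Delta_h-H\,(T^{\top}\Omega)$ and $A=T^{\top}\Omega H$. Deleting $e^*$ and applying the determinant lemma then gives
\[
t_{e^*}(\mathcal{L}(G),\omega)=\frac{\prod_{e\neq e^*}d_{h(e)}(G)}{\prod_{v\in V(G)}d_v(G)}\,\det\bigl(L_G+\omega_{e^*}(G)E_{u^*v^*}\bigr),
\]
after which your exponent bookkeeping is correct.

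\textbf{First gap: invertibility.} The diagonal entries of $X$ are weighted \emph{out}-degrees $d_{h(e)}(G)$. Hence $D$ is invertible only when $G$ has no sink, and under the hypotheses the same holds for $X$. The in-degree hypotheses do not exclude sinks: take edges $a\to b$, $b\to a$, $a\to c$, $b\to c$ and $e^*=(a,c)$. Your sentence that $d^-_{v^*}\ge 2$ keeps $X$ invertible is therefore wrong. A second in-edge at $v^*$ is precisely what puts $d_{v^*}(G)$ on the diagonal of $X$.

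When $G$ has no sink, your computation proves the formula as an identity of rational functions with no in-degree assumption at all (compare Theorem~\ref{xb}(2)). The sink case must be handled separately:
\begin{itemize}
\item Let $w$ be a sink. Every edge $e\neq e^*$ with $h(e)=w$ is then a sink of $\mathcal{L}(G)$.
\item Such an $e$ exists: by $d^-_w>0$ when $w\neq v^*$, and by $d^-_{v^*}\ge 2$ when $w=v^*$. So the left side vanishes.
\item Also $t_{u^*}(G,\omega)=0$, since $w\neq u^*$ would need outdegree $1$ in any tree rooted at $u^*$. So the right side vanishes too.
\end{itemize}
This case, together with keeping the exponents nonnegative, is where the hypotheses are actually used.

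\textbf{Second gap: the cofactor step.} Zero \emph{row} sums make the cofactors along each \emph{row} equal, $C_{ij}=C_{ii}=t_i(G,\omega)$; this is Lemma~\ref{x7}. Your claim that the cofactors in a column are equal is false. Taken literally, it gives $C_{u^*v^*}=C_{v^*v^*}=t_{v^*}(G,\omega)$, which is exactly the wrong root you were worried about.

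A direct fix: in $L_G+\omega_{e^*}(G)E_{u^*v^*}$, add every other column to column $u^*$. By the zero row sums, that column becomes $\omega_{e^*}(G)$ times the unit vector at $u^*$. Expanding along it gives $\omega_{e^*}(G)\det L_G(u^*,u^*)=\omega_{e^*}(G)\,t_{u^*}(G,\omega)$.

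With these two repairs, your route is a complete proof.
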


It is worth noting that Xu, Zhang, Ning and Li  \cite{C4} proved that the above two formulas also hold when the condition $d_{u}^{-}(G)>0$ is removed.

In 2022 Chen, Jin and  Yan  \cite{C5} derived the enumerative  formulas for two oriented spanning tree enumerators  of a weighted middle digraph. In 2023 Zhou and Bu \cite{Z2}, using biclique partitions of digraphs, obtained reduction formulas for the number of oriented spanning trees with a fixed root of digraphs, which extend the results of Knuth and Levine from  line digraphs   to general digraphs.

Let $G$ be a digraph with vertex set $\{1,2,...,n\}$ and $H_{1},H_{2},...,H_{n}$ be $n$  digraphs.   The {\it generalized join digraph} $\overrightarrow{G}=G[H_{1},H_{2},...,H_{n}]$ of $H_{1},H_{2},...,H_{n}$ with respect to $G$ is  a  digraph obtained from $G$ by replacing each vertex $i$ with $H_{i}$ and for any $u\in V(H_{i})$  and $v\in V(H_{j})$, $(u,v)\in E(\overrightarrow{G})$ if and only if $(i,j)\in E(G)$.  In 2021, Zhou and Bu \cite{Z1} used the Schur complement formula to obtain the local complement transformation for enumerating spanning trees in undirected graphs, and gave  an expression for counting spanning trees in  the generalized join graph $G[H_{1},H_{2},...,H_{n}]$ of an undirected graph $G$ in terms of Laplacian eigenvalues of $H_{1},H_{2},...,H_{n}$ and spanning trees of $G$, see Theorem $6.22$ of \cite{Z1}.

In this paper, motivated by the above result about undirected graphs, we study the enumeration of oriented spanning trees (with a fixed root) in  generalized join digraphs. In Section $2$ we  introduce the biclique-directed star  transformation formula for enumerating oriented spanning trees with a fixed root in digraphs. In Section $3$  we provide an expression of the characteristic polynomial of Laplacian matrix for a generalized join digraph $\overrightarrow{G}$. Furthermore, we express the number of  oriented spanning trees in  $\overrightarrow{G}$  in terms of Laplacian eigenvalues of $H_{1},H_{2},...,H_{n}$ and oriented spanning trees of $G$. In Section $4$,  using the biclique-directed star  transformation formula, we give the formula for the total number of  oriented spanning trees  with  roots in a certain  $H_{i}$ $(1\leq i \leq n)$ of $\overrightarrow{G}$ in terms of Laplacian eigenvalues of $H_{1},H_{2},...,H_{n}$ and oriented spanning trees of $G$. As applications, when each $H_{i}$ is  a given digraph, the enumerative formulae for oriented  spanning trees with a fixed root  of $\overrightarrow{G}$   are derived from our work.

\section{Preliminaries}
\subsection{Laplacian matrix of weighted digraphs}
Let $G$ be a weighted digraph on $n$ vertices, and each edge $e=(i,j)\in E(G)$ is weighted by an indeterminate $\omega_{ij}(G)$. The weighted degree of a vertex $i$ is $d_{i}(G)=\sum_{(i,j)\in E(G)}\omega_{ij}(G)$. The {\it Laplacian matrix} $L_{G}$ of $G$ is an $n\times n$ matrix with entries
\begin{displaymath}
(L_{G})_{ij}=
   \begin{cases}
   d_{i}(G) &\mbox {\rm if $i=j$},\\
 -\omega_{ij}(G) &\mbox {\rm if $(i, j)\in E(G)$},\\
 0  &\mbox {\rm otherwise}.
   \end{cases}
\end{displaymath}

Let  $A(i,j)$ denote the submatrix of a matrix $A$ obtained by deleting the $i$-th row and $j$-th column,  and let $det(A)$ denote the determinant of a square matrix $A$. The following result holds from the all minors matrix tree theorem \cite{C3}.

\begin{lemma}{\rm (\cite{C3})}\label{x7}
Let $G$ be a weighted digraph with $V(G)=\{1,2,...,n\}$. For any $i,j\in V(G)$, we have
$$det(L_{G}(i,j))=(-1)^{i+j}t_{i}(G,\omega).$$
 Furthermore, $t(G,\omega)=tr(adj(L_{G}))=\sum_{i\in V(G)}t_{i}(G,\omega)$,
 where $adj(A)$ and $tr(A)$ are the adjoint matrix  and the trace of a square matrix $A$, respectively.
\end{lemma}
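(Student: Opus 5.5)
The statement is a form of the all minors matrix tree theorem for the out-degree Laplacian $L_G$. I would prove it in two stages: first the principal case $i=j$, then the off-diagonal case by reducing it to the principal one. The trace identity then follows at once by summing over $i$.

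\textbf{Principal case $i=j$.} Expand $det(L_G(i,i))$ by multilinearity in the rows. Each row $k\neq i$ of $L_G$ equals $\sum_{(k,l)\in E(G)}\omega_{kl}(G)(e_k-e_l)$. The column of $l=i$ is deleted, so for $l=i$ the vector $e_k-e_l$ restricted to the remaining columns is just $e_k$. Hence
$$det(L_G(i,i))=\sum_{\phi}\prod_{k\neq i}\omega_{k\phi(k)}(G)\,det(M_\phi),$$
where $\phi$ runs over the out-edge choices $k\mapsto\phi(k)$ for $k\neq i$. Each such choice is a functional digraph $F_\phi$ in which every vertex other than $i$ has outdegree $1$ and $i$ has outdegree $0$.
\begin{itemize}
\item If $F_\phi$ is acyclic, it is an oriented spanning tree rooted at $i$. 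Ordering the vertices by distance to $i$ makes $M_\phi$ unitriangular, so $det(M_\phi)=1$.
\item If $F_\phi$ contains a directed cycle $C$ (which avoids $i$), the rows indexed by $C$ sum to zero, so $det(M_\phi)=0$.
\end{itemize}
This gives $det(L_G(i,i))=t_i(G,\omega)$.

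\textbf{Off-diagonal case $j\neq i$.} Every row of $L_G$ sums to zero, so $L_G\mathbf{1}=0$. In $L_G(i,j)$, add all the other columns to the column position where column $i$ would sit. By the row-sum property, that column becomes $-(\text{column } j \text{ of } L_G)$ with row $i$ deleted. Moving it back into the position of column $j$ costs a sign of $(-1)^{i+j-1}$ from the column transposition count. Combined with the minus sign, this gives
$$det(L_G(i,j))=(-1)^{i+j}det(L_G(i,i)).$$
The main obstacle is getting this sign exactly right. As a check, I would verify it on the $2\times 2$ case: there $det(L(1,2))=-\omega_{12}=-t_1$, which matches $(-1)^{1+2}t_1$.

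\textbf{Trace identity.} The adjugate has diagonal entries $adj(L_G)_{ii}=det(L_G(i,i))=t_i(G,\omega)$. Therefore $tr(adj(L_G))=\sum_i t_i(G,\omega)$. Every oriented spanning tree has exactly one root, so this sum equals $t(G,\omega)$.
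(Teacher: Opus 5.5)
Your proof is correct, and it takes a different route from the paper, which gives no proof at all and simply invokes Chaiken's all minors matrix tree theorem. That cited theorem handles minors with arbitrary sets of deleted rows and columns, counted by rooted spanning forests, and this lemma is its single-deletion special case.

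You instead give a self-contained argument for exactly the case the paper needs. First, you expand $\det(L_G(i,i))$ multilinearly over out-edge choices $\phi$. Functional digraphs containing a cycle vanish because the rows on the cycle sum to zero. Acyclic ones are exactly the trees in $\mathcal{T}_i(G)$ and contribute $1$ after a simultaneous reordering of rows and columns. Second, a zero-row-sum column operation reduces $\det(L_G(i,j))$ to $(-1)^{i+j}\det(L_G(i,i))$. Your count of $|i-j|-1$ adjacent transpositions, together with the extra minus sign, is right in both cases $i<j$ and $i>j$.

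The citation buys brevity and generality. Your route is elementary and makes transparent why $\mathrm{adj}(L_G)_{uv}=t_v(G,\omega)$ does not depend on $u$. That independence is precisely what the paper later uses, via $XL=0$, in the proofs of Lemmas \ref{x11} and \ref{xsh3}.

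One small slip is in your $2\times 2$ sanity check. Deleting row $1$ and column $2$ leaves the $(2,1)$ entry, so $\det(L(1,2))=-\omega_{21}$, not $-\omega_{12}$. Indeed $t_1(G,\omega)=\omega_{21}$, since the only tree rooted at $1$ is the edge $(2,1)$, while $\omega_{12}$ is $t_2(G,\omega)$. Your conclusion $\det(L(1,2))=-t_1$ is right; only the intermediate value is misindexed.
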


For a matrix $M$, let $M[i_{1},i_{2},...,i_{s}|j_{1},j_{2},...,j_{t}]$ denote an $s\times t$ submatrix of $M$ whose row indices and column indices are $i_{1},i_{2},...,i_{s} $ and $j_{1},j_{2},...,j_{t}$, respectively. The following is a determinant identity involving the Schur complement.
\begin{lemma}{\rm (\cite{B3})}\label{x8}
Let $M=\left(\begin{matrix}
A &B\\
C &D\\
\end{matrix}
\right)$ be a block matrix of order $n$ where $A=M[1,2,...,k| $ $1,2,...,k]$ is nonsingular. If $k+1\leq i_{1}< i_{2}<\cdots <i_{s}\leq n$ and $k+1\leq j_{1}< j_{2}<\cdots <j_{s}\leq n$, then
$$\frac{det(M[1,2,...,k,i_{1},i_{2},...,i_{s}|1,2,...,k,j_{1},j_{2},...,j_{s}])}{det(A)}=det(S[i_{1},i_{2},...,i_{s}|j_{1},j_{2},...,j_{s}]),$$
where $S=D-CA^{-1}B$.
\end{lemma}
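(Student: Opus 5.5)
This is the classical Schur complement determinant identity, so I would prove it directly by block Gaussian elimination. Set $I=\{1,\dots,k,i_{1},\dots,i_{s}\}$ and $J=\{1,\dots,k,j_{1},\dots,j_{s}\}$, and let $N=M[I|J]$. Because the first $k$ row indices and the first $k$ column indices are exactly $1,\dots,k$, this submatrix has the block form
$$N=\left(\begin{matrix} A & B'\\ C' & D'\end{matrix}\right),$$
where:
\begin{itemize}
\item $B'$ is formed by the columns $j_{1},\dots,j_{s}$ of $B$;
\item $C'$ is formed by the rows $i_{1},\dots,i_{s}$ of $C$;
\item $D'=D[i_{1},\dots,i_{s}|j_{1},\dots,j_{s}]$, with rows and columns of $D$ indexed by $k+1,\dots,n$ as in $M$.
\end{itemize}

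Since $A$ is nonsingular, I would use the factorization
$$N=\left(\begin{matrix} I_{k} & 0\\ C'A^{-1} & I_{s}\end{matrix}\right)\left(\begin{matrix} A & B'\\ 0 & D'-C'A^{-1}B'\end{matrix}\right).$$
Checking it is a one-line block multiplication. The left factor is unit lower triangular, so its determinant is $1$. The right factor is block upper triangular, so its determinant is $det(A)\,det(D'-C'A^{-1}B')$. Therefore $det(N)/det(A)=det(D'-C'A^{-1}B')$.

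The remaining step, and the only point needing care, is to identify $D'-C'A^{-1}B'$ with $S[i_{1},\dots,i_{s}|j_{1},\dots,j_{s}]$. Entrywise, the $(i_{p},j_{q})$ entry of $S=D-CA^{-1}B$ is
$$D_{i_{p}j_{q}}-(\text{row } i_{p} \text{ of } C)\,A^{-1}\,(\text{column } j_{q} \text{ of } B).$$
This is exactly the $(p,q)$ entry of $D'-C'A^{-1}B'$: taking a submatrix of a product selects rows from the left factor and columns from the right factor, while the middle factor $A^{-1}$ is untouched. Hence
$$S[i_{1},\dots,i_{s}|j_{1},\dots,j_{s}]=D'-C'A^{-1}B',$$
which gives the stated identity.

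The main thing to watch is keeping the index bookkeeping consistent: $S$ is indexed by $k+1,\dots,n$, the same labels its rows and columns carry in $M$. The increasing order of the $i$'s and $j$'s guarantees that no sign from reordering rows or columns appears.
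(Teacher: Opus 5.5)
Your proof is correct and complete. The paper gives no proof of this lemma; it is quoted from \cite{B3}. Your argument is the standard proof of this Schur complement minor identity: you recognize that $M[1,\dots,k,i_1,\dots,i_s|1,\dots,k,j_1,\dots,j_s]$ is itself a block matrix with leading block $A$, factor it as a unit lower triangular matrix times a block upper triangular one, and check that $(CA^{-1}B)[i_1,\dots,i_s|j_1,\dots,j_s]=C'A^{-1}B'$.
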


Let $f(A,\lambda)=det(\lambda I_{n}-A)$ be  the {\it characteristic polynomial} of a matrix $A$ of order $n$, where $I_{n}$ is the identity matrix of order $n$, and let $Spec(A)$ denote the {\it spectrum} of the matrix $A$. For a weighted digraph $G$ of order $n$, the {\it Laplacian eigenvalues} $\mu_{i}(G)$ for $1\leq i\leq n$ of $G$ are the roots of the  characteristic polynomial of $L_{G}$ and the {\it Laplacian spectrum} $Spec_{L}(G)$ of $G$ is the set of all  Laplacian eigenvalues of $G$. Let  $\mathrm{j}_{n}$ denote the all-ones column vector of dimension $n$.  Since all row  sums of $L_{G}$ are $0$, we have  $L_{G}\mathrm{j}_{n}=0\mathrm{j}_{n}$, that is, $L_{G}$ has an eigenvalue $0$ with corresponding eigenvector $\mathrm{j}_{n}$. In this paper  we will assume that the last Laplacian eigenvalue of $G$ is $0$, that is,  $\mu_{n}(G)=0$.

\subsection{Biclique-directed star  transformation formulas}
Let $G$ be a weighted digraph with a partition $V(G)=V_{1}\cup V_{2}$ and
$L_{G}=\left(\begin{matrix}
L_{1} &B\\
C &L_{2}\\
\end{matrix}
\right)$
where  $L_{1}$ and $L_{2}$ are principal submatrices of $L_{G}$ corresponding to $V_{1}$ and $V_{2}$, respectively. If $L_{1}$ and $L_{2}$ are nonsingular, then the Schur complement of   $L_1$ and   $L_{2}$ in  $L_{G}$ are   $S_{2}=L_{2}-CL_{1}^{-1}B$ and $S_{1}=L_{1}-BL_{2}^{-1}C$, respectively. Note that $S_{1}$ and $S_{2}$ are the Laplacian matrices of some weighted digraphs since all row sums are zero in $S_{1}$ and $S_{2}$. We define the weighted digraph with Laplacian matrix $S_{k}$ as the {\it  Schur complement weighted digraph} of $G$ with respect to $V_{k}$, denoted by $G(V_{k})$ ($k=1,2$).  Then $G(V_{k})$ has vertex set $V_{k}$ and edge set $\{(u,v):(S_{k})_{uv}\neq 0, u\neq v\} $.  The following is the Schur complement formula of $t_{u}(G,\omega)$ for a weighted digraph $G$.

\begin{proposition}\label{x9}
Let $G$ be a weighted digraph with a partition $V(G)=V_{1}\cup V_{2}$, and   $L_{1}$ and $L_{2}$ be principal submatrices of $L_{G}$ corresponding to $V_{1}$ and $V_{2}$, respectively.  Set $i\neq j\in \{1,2\}$. If $L_{i}$ is nonsingular and $u\in V_{j}$, then
$$t_{u}(G,\omega)=det(L_{i})t_{u}(G(V_{j}),\omega).$$
\end{proposition}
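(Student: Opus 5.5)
By Lemma \ref{x7}, $t_{u}(G,\omega)=\det(L_{G}(u,u))$, the principal minor of $L_G$ obtained by deleting the row and column of $u$. Likewise $t_{u}(G(V_{j}),\omega)=\det(S_{j}(u,u))$, where $S_j$ is the Laplacian matrix of the Schur complement weighted digraph $G(V_{j})$. So the plan is to reduce Proposition \ref{x9} to the identity
$$\det(L_{G}(u,u))=\det(L_{i})\det(S_{j}(u,u)),$$
and then read this identity off from Lemma \ref{x8}.

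First I would reorder the vertices so that the vertices of $V_{i}$ come first, followed by those of $V_{j}$. Applying the same permutation to rows and columns is a similarity by a permutation matrix. It therefore changes neither principal minors nor the Schur complement, up to relabelling. In this ordering
$$L_{G}=\left(\begin{matrix} L_{i} & B' \\ C' & L_{j}\end{matrix}\right),$$
with $L_{i}$ nonsingular by hypothesis, and the Schur complement of $L_{i}$ is $L_{j}-C'L_{i}^{-1}B'=S_{j}$.

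Next, let $k=|V_{i}|$. Take the row indices $i_{1}<\cdots<i_{s}$ and the column indices $j_{1}<\cdots<j_{s}$ to be the same list, namely all indices of $V_{j}$ except the one of $u$, so $s=|V_{j}|-1$. All of these are at least $k+1$. The submatrix $M[1,\dots,k,i_{1},\dots,i_{s}\,|\,1,\dots,k,j_{1},\dots,j_{s}]$ is then exactly $L_{G}(u,u)$, and $S[i_{1},\dots,i_{s}|j_{1},\dots,j_{s}]$ is exactly $S_{j}(u,u)$. Lemma \ref{x8} therefore gives $\det(L_{G}(u,u))/\det(L_{i})=\det(S_{j}(u,u))$.

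Finally, $S_{j}$ has zero row sums. This holds because $L_{G}\mathrm{j}=0$ gives $L_{i}\mathrm{j}+B'\mathrm{j}=0$ and $C'\mathrm{j}+L_{j}\mathrm{j}=0$, so $S_{j}\mathrm{j}=L_{j}\mathrm{j}+C'\mathrm{j}=0$, since $L_i^{-1}B'\mathrm{j}=-\mathrm{j}$. Hence $S_{j}$ is a genuine Laplacian matrix with off-diagonal entries $-(\text{weights})$, and Lemma \ref{x7} applies to $G(V_{j})$. This gives $\det(S_{j}(u,u))=t_{u}(G(V_{j}),\omega)$, which completes the argument.

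The main point requiring care is this last identification. Lemma \ref{x7} is stated for weighted digraphs with indeterminate weights, whereas the weights of $G(V_{j})$ are rational functions of the original weights. I would note that the matrix-tree identity is a polynomial identity in the off-diagonal entries of a zero-row-sum matrix, so it holds for any such entries, including these rational functions. The edge case $|V_{j}|=1$ is covered by the convention that the empty determinant is $1$, which matches $t_{u}=1$.
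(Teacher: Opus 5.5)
Your proposal is correct and follows the paper's proof. Both arguments express $t_u(G,\omega)$ and $t_u(G(V_j),\omega)$ as principal minors via Lemma \ref{x7}, then apply the Schur complement identity of Lemma \ref{x8} with row and column indices $V_j\setminus\{u\}$. Your extra checks (the reordering, zero row sums of $S_j$, rational-function weights, and the case $|V_j|=1$) only make explicit details the paper leaves implicit.
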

\begin{proof}
Without loss of generality, let $i=1$ and $j=2$. The Laplacian matrix of $G$ can be written as $L_{G}=\left(\begin{matrix}
L_{1} &B\\
C &L_{2}\\
\end{matrix}
\right)$. The Schur complement $L_{2}-CL_{1}^{-1}B=L_{G(V_{2})}$ is the Laplacian matrix of the Schur complement weighted digraph $G(V_{2})$. By Lemmas \ref{x7} and \ref{x8}, we have
$$t_{u}(G,\omega)=det(L_{1})det(L_{G(V_{2})}(u,u))=det(L_{1})t_{u}(G(V_{2}),\omega),~{\rm for~}u\in V_{2},$$
where $L_{G(V_{2})}(u,u)$ is the submatrix of $L_{G(V_{2})}$ obtained by deleting the $u$-th row and the $u$-th column.
{\hfill}
\end{proof}

A {\it biclique} \cite{G2}  is a complete bipartite digraph $Q$ whose vertices can be partitioned into two parts $Q^{(1)}$ and $Q^{(2)}$, and $E(Q)=\{(i,j):i\in Q^{(1)}, j\in Q^{(2)} \}$. Let $G$ be a weighted digraph with weights  induced by indeterminates $\{\omega_{i}(G)\}_{i\in V(G)}$ which  contains a weighted  biclique $Q$ as a weighted sub-digraph.  We define a weighted directed star $K_{1,|Q|}^{\omega}$  with vertex set $\{o\}\cup Q^{(1)}\cup Q^{(2)}$ and edge set $\{(u,o):u\in Q^{(1)}\}\cup \{(o,v):v\in Q^{(2)}\}$, where the weight of each edge in $K_{1,|Q|}^{\omega}$ is defined as
\begin{displaymath}\tag{2.1}
   \begin{cases}
 \omega_{uo}(K_{1,|Q|}^{\omega})=\sum_{v\in Q^{(2)}}\omega_{v}(G) &\mbox{if  $u\in Q^{(1)}$},\\
  \omega_{ov}(K_{1,|Q|}^{\omega})=\omega_{v}(G) &\mbox{if $v\in Q^{(2)}$}.\\
   \end{cases}
\end{displaymath}

By using the Schur complement formula in Proposition \ref{x9}, we can obtain the following biclique-directed star transformation formula for $t_{u}(G,\omega)$ as a useful tool for  counting  oriented spanning trees with a fixed root of generalized join digraphs in  Section $4$.
\begin{lemma}\label{x11}
Let $G$ be a weighted digraph, containing a weighted  biclique $Q$ as a weighted sub-digraph, whose weights are induced by indeterminates $\{\omega_{i}(G)\}_{i\in V(G)}$. Assume that $G^{*}$ is the weighted digraph  obtained from $G$ by replacing $Q$ with the weighted directed star $K_{1,|Q|}^{\omega}$ defined in $(2.1)$. Set $\omega(Q)=\sum_{v\in Q^{(2)}}\omega_{v}(G)\neq0$. Then
\begin{displaymath}
\begin{split}
&t_{u}(G^{*},\omega)=\omega(Q) t_{u}(G,\omega),~{\rm for~any}~u\in V(G),\\
&t_{o}(G^{*},\omega)=\sum_{v\in Q^{(1)}}t_{v}(G^{*},\omega)=\omega(Q)\sum_{v\in Q^{(1)}}t_{v}(G,\omega).\\
 \end{split}
 \end{displaymath}
\end{lemma}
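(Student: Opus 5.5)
We apply Proposition \ref{x9} to $G^{*}$ with the partition $V(G^{*})=V_{1}\cup V_{2}$, where $V_{1}=\{o\}$ and $V_{2}=V(G)$.

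\textbf{The block $L_{1}$.} It is the $1\times 1$ matrix $(d_{o}(G^{*}))$. Since $o$ has out-edges only to $Q^{(2)}$, with weights $\omega_{v}(G)$,
\[
d_{o}(G^{*})=\sum_{v\in Q^{(2)}}\omega_{v}(G)=\omega(Q)\neq 0 .
\]
So $L_{1}$ is nonsingular, and for every $u\in V(G)$ Proposition \ref{x9} gives $t_{u}(G^{*},\omega)=\omega(Q)\,t_{u}(G^{*}(V_{2}),\omega)$.

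\textbf{Main step: $G^{*}(V_{2})=G$ as weighted digraphs.} The Schur complement is $S_{2}=L_{2}-C\,\omega(Q)^{-1}B$.

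\emph{Diagonal entries of $L_{2}$.} For $u\in Q^{(1)}$, the $|Q^{(2)}|$ biclique edges with total weight $\omega(Q)$ are replaced by the single edge $(u,o)$ of weight $\omega(Q)$. Hence $d_{u}(G^{*})=d_{u}(G)$, and the same holds trivially for all other vertices.

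\emph{Off-diagonal entries of $L_{2}$.} These are those of $L_{G}$ with the biclique edges removed. If $G$ has parallel edges coinciding with biclique edges, I would treat $Q$ as a separate set of edges, so that only $Q$'s contribution is removed.

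\emph{The correction term.} $C$ is the column with entry $-\omega(Q)$ at each $u\in Q^{(1)}$ and $0$ elsewhere; $B$ is the row with entry $-\omega_{v}(G)$ at each $v\in Q^{(2)}$. Therefore
\[
C\,\omega(Q)^{-1}B=\mathbf{1}_{Q^{(1)}}\,\omega^{T}_{Q^{(2)}},
\]
whose $(u,v)$ entry is $\omega_{v}(G)$ for $u\in Q^{(1)}$, $v\in Q^{(2)}$. Subtracting it puts $-\omega_{v}(G)=-\omega_{uv}(G)$ exactly at the biclique positions, restoring the weights of $Q$.

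\emph{Overlap of $Q^{(1)}$ and $Q^{(2)}$.} If they intersect, a diagonal correction $-\omega_{u}(G)$ arises for $u\in Q^{(1)}\cap Q^{(2)}$; this would correspond to a loop in $Q$, contributing to $d_{u}$. I expect the definition of a biclique to assume disjoint parts, and I would state this assumption; otherwise loops cancel consistently.

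Hence $S_{2}=L_{G}$, so $t_{u}(G^{*}(V_{2}),\omega)=t_{u}(G,\omega)$, which gives the first formula.

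\textbf{The second formula.} In any oriented spanning tree of $G^{*}$ rooted at $o$, the only in-neighbours of $o$ are vertices of $Q^{(1)}$, so $o$ has some nonempty set of children in $Q^{(1)}$. I would not argue bijectively, since $o$ may have several children. Instead, use Lemma \ref{x7}: $t_{o}(G^{*},\omega)=\det L_{G^{*}}(o,o)=\det L_{2}$. The matrix determinant lemma gives
\[
\det(L_{2})=\det\!\big(S_{2}+C\,\omega(Q)^{-1}B\big)=\det(S_{2})+\omega^{T}_{Q^{(2)}}\operatorname{adj}(S_{2})\,\mathbf{1}_{Q^{(1)}} .
\]
Since $S_{2}=L_{G}$ is singular, $\det(S_{2})=0$. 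By Lemma \ref{x7}, $\operatorname{adj}(L_{G})_{vu}=t_{u}(G,\omega)$ for all $v$: its rows are all equal to the vector $(t_{u}(G,\omega))_{u}$. Hence
\[
\omega^{T}_{Q^{(2)}}\operatorname{adj}(L_{G})\,\mathbf{1}_{Q^{(1)}}=\omega(Q)\sum_{u\in Q^{(1)}}t_{u}(G,\omega).
\]
Combining with the first formula, $\omega(Q)\sum_{u\in Q^{(1)}}t_{u}(G,\omega)=\sum_{u\in Q^{(1)}}t_{u}(G^{*},\omega)$, which gives the stated equalities.

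The main care point is the determinant-lemma step and the sign and index conventions in $\operatorname{adj}$; the Schur-complement identification itself is routine.
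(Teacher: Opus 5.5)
Your proof is correct. The first formula is handled exactly as in the paper: take the Schur complement of the $1\times 1$ block $(d_o(G^{*}))=(\omega(Q))$, identify $L_2-\omega(Q)^{-1}CB$ with $L_G$, and apply Proposition~\ref{x9}. The only difference there is that you check the diagonal directly, while the paper deduces it from zero row sums.

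For the second formula your route differs from the paper's.

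\textbf{The paper's route.} It multiplies $L_{G^{*}}$ on the left by $X=adj(L_{G^{*}})$, whose rows all equal $(t_v(G^{*},\omega))_v$. Reading off the $o$-th column of $XL_{G^{*}}=0$ gives the balance equation $\omega(Q)\,t_o(G^{*},\omega)=\omega(Q)\sum_{v\in Q^{(1)}}t_v(G^{*},\omega)$. This holds entirely inside $G^{*}$, and (2.2) then converts it to $G$.

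\textbf{Your route.} You evaluate $t_o(G^{*},\omega)=\det L_2$ directly. You write $L_2=L_G+\mathbf{1}_{Q^{(1)}}\omega_{Q^{(2)}}^{\top}$ and apply the adjugate form of the matrix determinant lemma, which is valid for singular matrices. Since $\det L_G=0$ and $adj(L_G)$ has all rows equal to $(t_u(G,\omega))_u$, you obtain $\omega(Q)\sum_{u\in Q^{(1)}}t_u(G,\omega)$ first. The first formula then supplies the equality with $\sum_{u\in Q^{(1)}}t_u(G^{*},\omega)$. Your index and sign bookkeeping for the adjugate is right.

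\textbf{Comparison.} Both arguments rest on the same all-minors fact about the adjugate of a Laplacian, applied to $G^{*}$ in the paper and to $G$ in yours. The paper's version is shorter and needs no extra tool. Its identity $t_o(G^{*})=\sum_{v\in Q^{(1)}}t_v(G^{*})$ is really just the local balance equation at $o$. Your version costs one more standard lemma, but it gives a direct determinantal evaluation of $t_o(G^{*},\omega)$ in terms of $G$ that does not use the first formula at all.

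Your caveats about parallel edges and overlapping parts are unnecessary. The paper's biclique comes with a genuine partition $Q^{(1)}\cup Q^{(2)}$, and its edges are indexed by ordered pairs.
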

\begin{proof}
The Laplacian matrix of $G^{*}$ can be written as $L_{G^{*}}=\left(\begin{matrix}
d_{o}(G^{*}) &x^{\top}\\
y &L_{2}\\
\end{matrix}
\right),$
where $L_{2}$ is the principal submatrix of $L_{G^{*}}$ corresponding to $V(G)$, and $x,y$ are two column vectors such that
\begin{displaymath}
  x_{j}= \begin{cases}
-\omega_{j}(G) &\mbox{if  $j\in Q^{(2)}$},\\
0 &\mbox{otherwise};\\
   \end{cases}~~~~~~~~~~y_{j}= \begin{cases}
-\omega(Q) &\mbox{if  $j\in Q^{(1)}$},\\
0 &\mbox{otherwise}.\\
   \end{cases}
\end{displaymath}

 By the definition of $G^{*}$, for two distinct vertices $i,j\in V(G)$, we have
\begin{displaymath}
  (L_{2})_{ij}= \begin{cases}
0 &\mbox{if  $(i,j)\in E(Q)$},\\
-\omega_{j}(G) &\mbox{if $(i,j)\in E(G) \setminus E(Q)$},\\
0 &\mbox{if $(i,j)\notin E(G)$},\\
 \end{cases}
\end{displaymath}
and the weighted degree of $o$ in $G^{*}$ is
$$d_{o}(G^{*})=\sum_{v\in Q^{(2)}}\omega_{v}(G)=\omega(Q)\neq 0.$$
The Schur complement weighted digraph $G^{*}(V(G))$ has Laplacian matrix $L_{G^{*}(V(G))}=L_{2}-d_{o}(G^{*})^{-1}yx^{\top}$. For any two distinct vertices $i,j\in V(G)$, we have
\begin{displaymath}
  (yx^{\top})_{ij}=\begin{cases}
\omega_{j}(G)\omega(Q) &\mbox{if  $(i,j)\in E(Q)$},\\
0 &\mbox{if $(i,j)\in E(G) \setminus E(Q)$},\\
0 &\mbox{if $(i,j)\notin E(G)$},\\
 \end{cases}
\end{displaymath}
and
\begin{displaymath}
\left(L_{2}-d_{o}(G^{*})^{-1}yx^{\top}\right)_{ij}
=\begin{cases}
-\omega_{j}(G) &\mbox{if $(i,j)\in E(G)$},\\
0 &\mbox{if $(i,j)\notin E(G)$},
\end{cases}
=(L_G)_{ij}.
\end{displaymath}
Since both $L_{2}-d_{o}(G^{*})^{-1}yx^{\top}$ and $L_G$ have zero row sums, their diagonal entries also coincide.
Hence, $L_{G^{*}(V(G))}=L_{G}$, that is, $G^{*}(V(G))$ and $G$ are isomorphic weighted digraphs. By Proposition \ref{x9}, we have
\begin{displaymath}\tag{2.2}
t_{u}(G^{*},\omega)=d_{o}(G^{*})t_{u}(G^{*}(V(G)),\omega)=\omega(Q)t_{u}(G,\omega).
\end{displaymath}

Let $X$ be the adjoint matrix of $L_{G^{*}}$. Then
\begin{displaymath}\tag{2.3}
 XL_{G^{*}}=det(L_{G^{*}})I=0.
\end{displaymath}
By Lemma \ref{x7}, we get
\begin{displaymath}\tag{2.4}
 (X)_{uv}=t_{v}(G^{*},\omega),~{\rm for~any~}u,v\in V(G^{*}).
\end{displaymath}
Hence,  by Equations (2.3) and (2.4), we have
\begin{displaymath}
 \sum_{v\in V(G^{*})}t_{v}(G^{*},\omega)(L_{G^{*}})_{vo}=d_{o}(G^{*})t_{o}(G^{*},\omega)-\sum_{v\in Q^{(1)}}t_{v}(G^{*},\omega)\omega(Q)=0.
\end{displaymath}
By Equation (2.2), we get
$$t_{o}(G^{*},\omega)=\sum_{v\in Q^{(1)}}t_{v}(G^{*},\omega)=\omega(Q)\sum_{v\in Q^{(1)}}t_{v}(G,\omega).$$
{\hfill}
\end{proof}

Taking $\omega_{v}(G)=1$ for all $v \in V(G)$,   we obtain a  weighted directed star $K_{1,|Q|}$ with vertex set $\{o\}\cup Q^{(1)}\cup Q^{(2)}$ and edge set $\{(u,o):u\in Q^{(1)}\}\cup \{(o,v):v\in Q^{(2)}\}$, where the weight of each edge in $K_{1,|Q|}$ is defined as
\begin{displaymath}\tag{2.5}
   \begin{cases}
 \omega_{uo}(K_{1,|Q|})=|Q^{(2)}| &\mbox{if  $u\in Q^{(1)}$},\\
  \omega_{ov}(K_{1,|Q|})=1 &\mbox{if $v\in Q^{(2)}$}.\\
   \end{cases}
\end{displaymath}
Now we  give  the following result from Lemma \ref{x11}.

\begin{lemma}\label{xsh-1}
Let $G$ be a  digraph  containing a   biclique $Q$ as a  sub-digraph. Assume that $G^{*}$ is the weighted digraph  obtained from $G$ by replacing $Q$ with the weighted directed star $K_{1,|Q|}$ defined in $(2.5)$. Then
\begin{displaymath}
\begin{split}
&t_{u}(G^{*},\omega)=|Q^{(2)}| t_{u}(G),~{\rm for~any}~u\in V(G),\\
&t_{o}(G^{*},\omega)=\sum_{v\in Q^{(1)}}t_{v}(G^{*},\omega)=|Q^{(2)}|\sum_{v\in Q^{(1)}}t_{v}(G).\\
 \end{split}
 \end{displaymath}
\end{lemma}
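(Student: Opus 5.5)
This is the specialization of Lemma \ref{x11} to the unweighted case, so I would derive it directly from that lemma. Regard the digraph $G$ as a weighted digraph whose weights are induced by the indeterminates $\{\omega_{i}(G)\}_{i\in V(G)}$. Then every edge $(u,v)\in E(G)$ has weight $\omega_{uv}(G)=\omega_{v}(G)$, and the biclique $Q$ is a weighted sub-digraph. Lemma \ref{x11} then gives the two identities
\begin{displaymath}
\begin{split}
&t_{u}(G^{*},\omega)=\omega(Q)\, t_{u}(G,\omega),~{\rm for~any}~u\in V(G),\\
&t_{o}(G^{*},\omega)=\sum_{v\in Q^{(1)}}t_{v}(G^{*},\omega)=\omega(Q)\sum_{v\in Q^{(1)}}t_{v}(G,\omega),
\end{split}
\end{displaymath}
where $\omega(Q)=\sum_{v\in Q^{(2)}}\omega_{v}(G)$. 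These are identities of polynomials in the indeterminates, since both sides are polynomial tree enumerators.

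Next I specialize $\omega_{v}(G)=1$ for every $v\in V(G)$. The identities remain valid under this evaluation. To be safe about the hypothesis $\omega(Q)\neq0$ used in Lemma \ref{x11}, observe that the evaluated value is $\omega(Q)=|Q^{(2)}|$, which is nonzero whenever $Q^{(2)}\neq\emptyset$; this holds because $Q$ is a biclique with at least one edge. One can therefore also rerun the Schur complement argument directly with these numeric weights.

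Under this specialization every edge of $G$ gets weight $1$, so $t_{u}(G,\omega)=t_{u}(G)$. The star $K^{\omega}_{1,|Q|}$ of $(2.1)$ becomes exactly the star $K_{1,|Q|}$ of $(2.5)$: each edge $(u,o)$ with $u\in Q^{(1)}$ gets weight $\sum_{v\in Q^{(2)}}1=|Q^{(2)}|$, and each edge $(o,v)$ with $v\in Q^{(2)}$ gets weight $1$. Hence the weighted digraph $G^{*}$ coincides with the one in the statement, and substituting gives both claimed formulas.

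The only point needing care is the legitimacy of evaluating the indeterminates. It is justified either by the polynomial-identity argument above or by noting that the proof of Lemma \ref{x11} uses only that $d_{o}(G^{*})=\omega(Q)$ is invertible, which holds here since $|Q^{(2)}|\ge1$.
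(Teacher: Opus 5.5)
Your proposal is correct and follows the paper's own route: the paper likewise obtains this lemma by setting $\omega_{v}(G)=1$ for all $v\in V(G)$ in Lemma \ref{x11}, so that the star (2.1) becomes (2.5). Your justification of the specialization (the polynomial-identity argument, or rerunning the Schur complement step with $d_{o}(G^{*})=|Q^{(2)}|\neq 0$), and your note that $Q^{(2)}\neq\emptyset$ is needed, make explicit what the paper leaves tacit.
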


\section{Counting oriented spanning trees  in $G[H_{1},H_{2},...,H_{n}]$}
 Let $M$  be a complex matrix of order $n$ described in the following block form

\begin{displaymath}\tag{3.1}
M=\left(\begin{matrix}
M_{11} &M_{12} &\cdots &M_{1t}\\
M_{21} &M_{22} &\cdots &M_{2t}\\
  \vdots &\vdots &\ddots &\vdots\\
  M_{t1} &M_{t2} &\cdots &M_{tt}
   \end{matrix}
\right),
\end{displaymath}
where the blocks $M_{ij}$ are $n_{i}\times n_{j}$ matrices for any $1\leq i,j\leq t$ and $n=n_{1}+ n_{2}+ \cdots +n_{t}$. For $1\leq i,j\leq t$, let $b_{ij}$ denote the average row sum of $M_{ij}$. The quotient matrix $B(M)= (b_{ij})$
is a $t\times t$ matrix whose entries are the average row sums of the blocks $M_{ij}$ of $M$. If each block $M_{ij}$ of $M$ has constant row sum, i.e., $M_{ij}\mathrm{j}_{n_{j}}=b_{ij}\mathrm{j}_{n_{i}}$, then the matrix $B(M)$ is called the {\it equitable quotient matrix} of $M$. The following observation about the relation between the spectrum of a  complex matrix and its equitable quotient matrix can be found in \cite{Y2}.

\begin{lemma}\label{x211}{\rm (\cite{Y2})}
The eigenvalues of the equitable quotient matrix $B(M)$ are the eigenvalues of the matrix $M$, where $M$ is the matrix given by $(3.1)$.
\end{lemma}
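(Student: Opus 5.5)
The plan is to prove the standard fact that if $M$ has an equitable partition with quotient $B=B(M)$, then $\det(\lambda I_t - B)$ divides $\det(\lambda I_n - M)$, so that every eigenvalue of $B(M)$, counted with multiplicity, is an eigenvalue of $M$. The key object is the $n\times t$ characteristic matrix $P$ of the partition. Its $j$-th column is the indicator vector of the $j$-th block of indices; equivalently, $P=\mathrm{diag}(\mathrm{j}_{n_1},\dots,\mathrm{j}_{n_t})$.

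The first step is to check that the constant row sum hypothesis $M_{ij}\mathrm{j}_{n_j}=b_{ij}\mathrm{j}_{n_i}$ is exactly the matrix identity
$$MP=PB(M).$$
This follows by comparing block rows. The $(i,j)$ block of $MP$ is $M_{ij}\mathrm{j}_{n_j}$, and the corresponding block of $PB(M)$ is $b_{ij}\mathrm{j}_{n_i}$.

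The second step derives the eigenvalue statement.
\begin{itemize}
\item \emph{Eigenvalues as a set.} Take an eigenvector $x\neq 0$ of $B(M)$ with $B(M)x=\mu x$. Then $M(Px)=PB(M)x=\mu Px$. The vector $Px$ is nonzero because $P$ has full column rank: its columns have disjoint nonempty supports. Hence $\mu$ is an eigenvalue of $M$.
\item \emph{Multiplicities.} Extend the columns of $P$ to a basis of $\mathbb{C}^n$, giving an invertible $R=(P\;\,Q)$. The relation $MP=PB(M)$ says the column space of $P$ is $M$-invariant. Therefore
$$R^{-1}MR=\left(\begin{matrix} B(M) & *\\ 0 & N\end{matrix}\right)$$
for some $(n-t)\times(n-t)$ matrix $N$. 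It follows that $f(M,\lambda)=f(B(M),\lambda)\,f(N,\lambda)$.
\end{itemize}

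The only point needing care is the full column rank of $P$, since the argument breaks if some column could vanish. This requires every block to be nonempty, $n_i\ge 1$, which is implicit in the block form (3.1). Everything else is routine block multiplication.

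This factorization is presumably the form used later. It lets one write the Laplacian characteristic polynomial of $G[H_1,\dots,H_n]$ as the quotient part times the contributions from the nonzero Laplacian eigenvalues of each $H_i$.
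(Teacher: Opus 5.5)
Your proof is correct. The paper gives no proof of this lemma (it is quoted from \cite{Y2}), so the natural comparison is with the paper's own proof of Lemma~\ref{x2112}, the refined spectral statement that Lemma~\ref{x13145} actually uses.

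\textbf{How the paper argues.} Lemma~\ref{x2112} additionally assumes $M_{ij}=s_{ij}J_{n_i,n_j}$ for $i\neq j$. The authors then
\begin{itemize}
\item unitarily triangularize each $M_{ii}$ with first column $\frac{1}{\sqrt{n_i}}\mathrm{j}_{n_i}$ (Lemma~\ref{x2113});
\item observe that the off-diagonal blocks collapse to $s_{ij}\sqrt{n_in_j}E_{n_i,n_j}$;
\item permute to a block upper triangular form with diagonal blocks $B',Y^{*}_{n_1},\dots,Y^{*}_{n_t}$;
\item finish with the diagonal similarity $B(M)=D^{-1}B'D$.
\end{itemize}

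\textbf{What your route buys.} Your characteristic-matrix argument, $MP=PB(M)$ plus an invariant subspace, is more elementary and more general. It uses only the constant-row-sum hypothesis, needs no unitarity, and already yields $f(B(M),\lambda)\mid f(M,\lambda)$, so multiplicities come for free. What it does not give by itself is the identity of the cofactor $f(N,\lambda)$.

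\textbf{Your closing remark overstates this.} The bare factorization does not produce the per-$H_i$ factors of Lemma~\ref{x13145}; for that you need the extra $J$-structure. Your framework absorbs it easily if you choose the complement block-diagonally: take $Q=\mathrm{diag}(Q_1,\dots,Q_t)$ with each $(\mathrm{j}_{n_i}\;Q_i)$ invertible, and define $N_i$ by $M_{ii}Q_i=\mathrm{j}_{n_i}c_i^{\top}+Q_iN_i$. For $i\neq j$, the piece $M_{ij}Q_j=s_{ij}\mathrm{j}_{n_i}(\mathrm{j}_{n_j}^{\top}Q_j)$ sitting in block row $i$ lies in the column space of $P$. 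Hence $N=\mathrm{diag}(N_1,\dots,N_t)$ with $f(N_i,\lambda)=f(M_{ii},\lambda)/(\lambda-p_{ii})$. This recovers Lemma~\ref{x2112} without Schur triangularization.
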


Let $A$ be an $n\times n$ matrix. The conjugate transpose matrix $A^{*}$ of $A$ is defined by $A^{*}=\overline{A}^{\top}$, where $\overline{A}$ is a matrix obtained  by taking the conjugate of each element of $A$. $A$ is called a unitary matrix if $AA^{*}=A^{*}A=I_{n}$. Let  $J_{p,q}$ denote the $p\times q$ matrix whose every entry is $1$.

\begin{lemma}\label{x2113}{\rm (\cite{H1}, Schur's unitary triangularization theorem)}
Given an $n\times n$ matrix $A$ with eigenvalues $\lambda_{1},\lambda_{2},\ldots,\lambda_{n}$ in any prescribed order, there is a unitary matrix $U$ such that
$$U^{*}AU=Y=(y_{ij})$$
is an upper triangular matrix with diagonal entries $y_{ii}=\lambda_{i}$ for each $i=1,2,...,n$. That is, every square matrix $A$ is unitarily similar to a triangular matrix whose diagonal entries are the eigenvalues of $A$ in a prescribed order. Moreover, the first column of   $U$ can be an eigenvector of $A$ corresponding to the eigenvalue $\lambda_{1}$.
\end{lemma}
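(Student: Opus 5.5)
We prove the statement by induction on $n$, building the unitary matrix one column at a time from an eigenvector. For $n=1$ there is nothing to prove: take $U=(1)$. For $n\geq 2$, let $x$ be an eigenvector of $A$ for the prescribed first eigenvalue $\lambda_{1}$. Such a vector exists because $\lambda_{1}$ is a root of $f(A,\lambda)=det(\lambda I_{n}-A)$, so $\lambda_{1}I_{n}-A$ is singular. Normalize it to $u_{1}=x/\|x\|$, so that $u_{1}^{*}u_{1}=1$.

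Next we extend $u_{1}$ to an orthonormal basis $u_{1},u_{2},\ldots,u_{n}$ of $\mathbb{C}^{n}$. One way is to complete $u_{1}$ to any basis and then apply Gram--Schmidt with respect to the standard Hermitian inner product. Let $U_{1}=(u_{1}|u_{2}|\cdots|u_{n})$, which is unitary. Since $Au_{1}=\lambda_{1}u_{1}$ and $u_{j}^{*}u_{1}=\delta_{j1}$, the first column of $U_{1}^{*}AU_{1}$ is $\lambda_{1}e_{1}$. Hence
$$U_{1}^{*}AU_{1}=\left(\begin{matrix}\lambda_{1} & w^{*}\\ 0 & A_{1}\end{matrix}\right)$$
for some vector $w$ and some $(n-1)\times(n-1)$ matrix $A_{1}$.

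Now we identify the spectrum of $A_{1}$. Similar matrices have the same characteristic polynomial, so $f(A,\lambda)=(\lambda-\lambda_{1})f(A_{1},\lambda)$. Thus the eigenvalues of $A_{1}$, counted with multiplicity, are exactly $\lambda_{2},\ldots,\lambda_{n}$. This is the one point that needs care: multiplicities are handled through the factorization of the characteristic polynomial, which is what allows the remaining eigenvalues to be placed in any prescribed order. By the induction hypothesis there is a unitary $V$ of order $n-1$ with $V^{*}A_{1}V$ upper triangular and diagonal $\lambda_{2},\ldots,\lambda_{n}$ in that order.

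Finally we assemble the triangularizing matrix. Set
$$U=U_{1}\left(\begin{matrix}1&0\\0&V\end{matrix}\right).$$
This is a product of unitary matrices, hence unitary. A direct block computation gives
$$U^{*}AU=\left(\begin{matrix}\lambda_{1}& w^{*}V\\ 0 & V^{*}A_{1}V\end{matrix}\right),$$
which is upper triangular with diagonal $\lambda_{1},\lambda_{2},\ldots,\lambda_{n}$. Because the second factor fixes $e_{1}$, the first column of $U$ is $U_{1}e_{1}=u_{1}$, an eigenvector of $A$ for $\lambda_{1}$. This proves the final assertion. The only step with any real content is the multiplicity bookkeeping for $A_{1}$; the rest is routine block multiplication.
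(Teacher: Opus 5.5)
Your proof is correct and is the standard induction argument: a unit eigenvector for $\lambda_1$, extension to a unitary $U_1$, then recursion on the block $A_1$, whose spectrum you read off from $f(A,\lambda)=(\lambda-\lambda_1)f(A_1,\lambda)$. This is the proof in Horn and Johnson, which the paper cites without reproducing; because your $x$ was an arbitrary eigenvector for $\lambda_1$, your argument also gives the slightly stronger form the paper needs in Lemma~\ref{x2112}, namely that any prescribed unit eigenvector such as $\frac{1}{\sqrt{n_i}}\mathrm{j}_{n_i}$ can be the first column of $U$.
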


\begin{lemma}\label{x2112}
Let $M$ be a matrix  defined in $(3.1)$ such that $M_{ij}=s_{ij}J_{n_{i},n_{j}}$ for $i\neq j$, and each $M_{ii}$ be a square matrix with constant row sum $p_{ii}$. Then the equitable quotient matrix of $M$ is $B(M)=(b_{ij})$ with  $b_{ij}=s_{ij}n_{j}$ if $i\neq j$, and $b_{ii}=p_{ii}$. Moreover,
$$Spec(M)=Spec(B(M))\cup \bigcup_{i=1}^{t}(Spec(M_{ii})\setminus \{p_{ii}\}).$$
\end{lemma}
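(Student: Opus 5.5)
The first claim, that $B(M)$ has the stated entries, is a direct computation. For $i\neq j$ the block $M_{ij}=s_{ij}J_{n_i,n_j}$ has every row sum equal to $s_{ij}n_j$. For $i=j$ the row sum of $M_{ii}$ is $p_{ii}$ by hypothesis. So every block has constant row sum, $B(M)$ is an equitable quotient matrix, and $b_{ij}=s_{ij}n_j$ for $i\ne j$, $b_{ii}=p_{ii}$. Lemma \ref{x211} then says that $Spec(B(M))$ is contained in $Spec(M)$. The real task is to show that the remaining eigenvalues (with multiplicity) are exactly $Spec(M_{ii})\setminus\{p_{ii}\}$, meaning one copy of $p_{ii}$ is removed from each $Spec(M_{ii})$ as a multiset.

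For this I would build a block-diagonal unitary similarity using Lemma \ref{x2113}. The vector $\mathrm{j}_{n_i}$ is an eigenvector of $M_{ii}$ for the eigenvalue $p_{ii}$. So there is a unitary $U_i$ whose first column is $\frac{1}{\sqrt{n_i}}\mathrm{j}_{n_i}$ and for which $U_i^{*}M_{ii}U_i$ is upper triangular with diagonal $p_{ii},\lambda_{2}^{(i)},\dots,\lambda_{n_i}^{(i)}$. Here $\{\lambda_k^{(i)}\}_{k\ge 2}=Spec(M_{ii})\setminus\{p_{ii}\}$. Set $U=\mathrm{diag}(U_1,\dots,U_t)$ and compute $U^{*}MU$.

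For the off-diagonal blocks I use $J_{n_i,n_j}=\mathrm{j}_{n_i}\mathrm{j}_{n_j}^{\top}$. The other columns of $U_j$ are orthogonal to $\mathrm{j}_{n_j}$, so $\mathrm{j}_{n_j}^{\top}U_j=(\sqrt{n_j},0,\dots,0)$, and likewise $U_i^{*}\mathrm{j}_{n_i}=(\sqrt{n_i},0,\dots,0)^{\top}$. Therefore $U_i^{*}M_{ij}U_j$ has a single nonzero entry, $s_{ij}\sqrt{n_in_j}$, in position $(1,1)$.

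Next I permute rows and columns by the same permutation so that the first indices of all $t$ blocks come first. The resulting matrix has the form $\begin{pmatrix} C & * \\ 0 & T\end{pmatrix}$. Here $T$ is block diagonal with upper triangular blocks whose diagonals are $\lambda_k^{(i)}$, $k\ge2$. The lower-left block is zero because, within block $i$, the entries in rows $k\ge2$ of column $1$ vanish (triangularity), and off-diagonal blocks contribute nothing there. The $t\times t$ matrix $C$ has entries $c_{ii}=p_{ii}$ and $c_{ij}=s_{ij}\sqrt{n_in_j}$. So $C=D^{1/2}B(M)D^{-1/2}$ with $D=\mathrm{diag}(n_1,\dots,n_t)$, which means $C$ is similar to $B(M)$. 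The characteristic polynomial factorizes as $f(M,\lambda)=f(B(M),\lambda)\prod_{i}\prod_{k\ge2}(\lambda-\lambda_k^{(i)})$, and this gives the spectral identity with multiplicities.

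The main obstacle is the bookkeeping in this block triangularization. One must check that the lower-left block is genuinely zero. One must also check that the similarity $C\sim B(M)$ uses the correct scaling: $c_{ij}=s_{ij}\sqrt{n_in_j}=n_i^{1/2}\,(s_{ij}n_j)\,n_j^{-1/2}$.
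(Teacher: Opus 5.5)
Your proposal is correct and follows essentially the same route as the paper. Both build a block-diagonal unitary similarity from Schur triangularizations whose first columns are $\frac{1}{\sqrt{n_i}}\mathrm{j}_{n_i}$, reduce each off-diagonal block to a single $(1,1)$ entry $s_{ij}\sqrt{n_in_j}$, and permute to a block upper triangular form whose leading $t\times t$ block is diagonally similar to $B(M)$; your scaling $C=D^{1/2}B(M)D^{-1/2}$ with $D=\mathrm{diag}(n_1,\dots,n_t)$ is the paper's $B(M)=D^{-1}B'D$ with $D=\mathrm{diag}(\sqrt{n_1},\dots,\sqrt{n_t})$.
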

\begin{proof}
Since $M_{ii}\mathrm{j}_{n_{i}}=p_{ii}\mathrm{j}_{n_{i}}$ for $1\leq i\leq t$, we let $Spec(M_{ii})=\{p_{ii},\lambda_{2}^{(i)},...,\lambda_{n_{i}}^{(i)}\}$.    By Lemma \ref{x2113},  for any $1\leq i\leq t$, there is an $n_{i}\times n_{i}$ unitary matrix $U_{n_{i}}$ with its first column being $\frac{1}{\sqrt{n_{i}}}\mathrm{j}_{n_{i}}$ such that $$U_{n_{i}}^{*}M_{ii}U_{n_{i}}=Y_{n_{i}}=(y_{ab}),$$
 where $Y_{n_{i}}$ is upper triangular with diagonal entries $y_{11}=p_{ii}$ and $y_{kk}=\lambda_{k}^{(i)}$ for each $k=2,3,...,n_{i}$. In particular, for $i\neq j$, we have
  $$U_{n_{i}}^{*}M_{ij}U_{n_{j}}=U_{n_{i}}^{*}(s_{ij}J_{n_{i},n_{j}})U_{n_{j}}=s_{ij}\sqrt{n_{i}n_{j}}E_{n_{i},n_{j}},$$
where $E_{n_{i},n_{j}}$ is an $n_{i}\times n_{j}$ matrix with all zero entries except the (1,1)-entry equals $1$.

Now let $U=diag(U_{n_{1}},U_{n_{2}},...,U_{n_{t}})$ be a block diagonal matrix. Clearly, $U$ is a unitary matrix. Moreover,  $U^{*}MU$ has the block form with
$$(U^{*}MU)_{ii}=Y_{n_{i}}=\left(\begin{matrix}
p_{ii} &*\\
0 &Y_{n_{i}}^{*}\\
\end{matrix}
\right),$$
where $Y_{n_{i}}^{*}$ is the principal submatrix of $Y_{n_{i}}$ in an    upper triangular  form with diagonal entries $y_{kk}=\lambda_{k}^{(i)}$ for $k=2,3,...,n_{i}$ and for $i\neq j$,
$$(U^{*}MU)_{ij}=s_{ij}\sqrt{n_{i}n_{j}}E_{n_{i},n_{j}}.$$

Therefore, $U^{*}MU$ is permutationally similar to a block  upper triangular matrix  with diagonal blocks $B',Y_{n_{1}}^{*},Y_{n_{2}}^{*},...,Y_{n_{t}}^{*}$, where
\begin{displaymath}
B'=\left(\begin{matrix}
p_{11} &s_{12}\sqrt{n_{1}n_{2}} &\cdots &s_{1t}\sqrt{n_{1}n_{t}}\\
s_{21}\sqrt{n_{2}n_{1}} &p_{22} &\cdots &s_{2t}\sqrt{n_{2}n_{t}}\\
  \vdots &\vdots &\ddots &\vdots\\
 s_{t1}\sqrt{n_{t}n_{1}}  & s_{t2}\sqrt{n_{t}n_{2}} &\cdots &p_{tt}
   \end{matrix}
\right).
\end{displaymath}
Note that $B(M)=D^{-1}B'D$, where $D=diag(\sqrt{n_{1}},\sqrt{n_{2}},...,\sqrt{n_{t}})$, and so $Spec(B')=Spec(B(M))$. Finally,
\begin{displaymath}
\begin{split}
Spec(M)&=Spec(U^{*}MU)=Spec(B')\cup\bigcup_{i=1}^{t}(Spec(M_{ii})\setminus \{p_{ii}\})\\
&=Spec(B(M))\cup \bigcup_{i=1}^{t}(Spec(M_{ii})\setminus \{p_{ii}\}).
\end{split}
\end{displaymath}
{\hfill}
\end{proof}

In the following, we express the characteristic polynomial of Laplacian matrix of $\overrightarrow{G}=G[H_{1},H_{2},...,H_{n}]$ in terms of  Laplacian eigenvalues of $H_{1},H_{2},...,H_{n}$ and the equitable quotient matrix of Laplacian matrix of $\overrightarrow{G}$.
\begin{lemma}\label{x13145}
Let $\overrightarrow{G}=G[H_{1},H_{2},...,H_{n}]$  be a  generalized join digraph with $m_{i}=|V(H_{i})|$ for $1\leq i\leq n$.  Then the characteristic polynomial of $L_{\overrightarrow{G}}$  can be expressed by
\begin{displaymath}
\begin{split}
f(L_{\overrightarrow{G}},\lambda)=
f(B(L_{\overrightarrow{G}}),\lambda)\prod_{i=1}^{n}\prod_{k=1}^{m_{i}-1}\left(\lambda-\mu_{k}(H_{i})-\sum_{(i,j)\in E(G)}m_{j}\right),
  \end{split}
\end{displaymath}
where
\begin{displaymath}\tag{3.2}
B(L_{\overrightarrow{G}})=\left(\begin{matrix}
\sum_{(1,j)\in E(G)}m_{j} &-a_{12} &\cdots &-a_{1n}\\
-a_{21} &\sum_{(2,j)\in E(G)}m_{j} &\cdots &-a_{2n}\\
  \vdots &\vdots &\ddots &\vdots\\
  -a_{n1} &-a_{n2} &\cdots &\sum_{(n,j)\in E(G)}m_{j}
   \end{matrix}
\right),
\end{displaymath}
 where  $a_{ij}=m_{j}$ if $(i,j)\in E(G)$, $a_{ij}=0$ otherwise.
\end{lemma}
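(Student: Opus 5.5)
The plan is to write $L_{\overrightarrow{G}}$ in the block form (3.1), with blocks indexed by the vertices $1,\dots,n$ of $G$ and block sizes $m_1,\dots,m_n$, and then apply Lemma \ref{x2112} with $M=L_{\overrightarrow{G}}$. First I identify the blocks from the definition of the generalized join.

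For $i\neq j$, every vertex of $H_i$ is joined to every vertex of $H_j$ exactly when $(i,j)\in E(G)$, and otherwise there are no edges between them. Hence the off-diagonal block is $M_{ij}=-J_{m_i,m_j}$ if $(i,j)\in E(G)$ and $M_{ij}=0$ otherwise. In the notation of Lemma \ref{x2112}, $s_{ij}=-1$ or $0$, so $s_{ij}m_j=-a_{ij}$.

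For the diagonal block, a vertex $u\in V(H_i)$ has outdegree $d_u^{+}(H_i)+\sum_{(i,j)\in E(G)}m_j$ in $\overrightarrow{G}$. Here I take $G$ without loops, so no $j=i$ term appears. Therefore
$$M_{ii}=L_{H_i}+c_iI_{m_i},\qquad c_i=\sum_{(i,j)\in E(G)}m_j .$$
Since $L_{H_i}$ has zero row sums, $M_{ii}$ has constant row sum $p_{ii}=c_i$. Lemma \ref{x2112} then gives $B(M)$ equal to the matrix in (3.2), with diagonal entries $c_i$ and off-diagonal entries $-a_{ij}$. It also gives
$$Spec(L_{\overrightarrow{G}})=Spec(B(L_{\overrightarrow{G}}))\cup\bigcup_i\big(Spec(M_{ii})\setminus\{c_i\}\big).$$

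The spectrum of $M_{ii}$ is $\{\mu_k(H_i)+c_i: 1\le k\le m_i\}$. Under the convention $\mu_{m_i}(H_i)=0$, the eigenvalue removed is exactly $\mu_{m_i}(H_i)+c_i=c_i$, which leaves $\mu_k(H_i)+c_i$ for $k=1,\dots,m_i-1$.

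The main subtle point is multiplicity. The set-union statement must be read as a multiset identity, i.e. as an identity of characteristic polynomials. To get this, I use the proof of Lemma \ref{x2112} directly. There, $U^{*}MU$ is permutation-similar to a block upper triangular matrix with diagonal blocks $B'$ and $Y^{*}_{m_1},\dots,Y^{*}_{m_n}$. The matrix $B'$ is diagonally similar to $B(M)$, and $Y^{*}_{m_i}$ is upper triangular with diagonal entries $\mu_k(H_i)+c_i$ for $k<m_i$. Taking determinants of $\lambda I-U^{*}MU$ then gives
$$f(L_{\overrightarrow{G}},\lambda)=f(B(L_{\overrightarrow{G}}),\lambda)\prod_{i=1}^n\prod_{k=1}^{m_i-1}(\lambda-\mu_k(H_i)-c_i).$$
This holds with multiplicities, because the determinant of a block triangular matrix is the product of the determinants of its diagonal blocks.

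One more detail needs care when applying Lemma \ref{x2113} to $M_{ii}$. The triangularization should take the eigenvalue $c_i$ first, with the first column of $U$ equal to $\mathrm{j}_{m_i}/\sqrt{m_i}$, and the remaining eigenvalues in the order $\mu_1+c_i,\dots,\mu_{m_i-1}+c_i$. This matches the paper's labeling of the Laplacian eigenvalues of $H_i$.
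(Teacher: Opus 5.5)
Your proposal is correct and follows the paper's own argument. Like the paper, you write $L_{\overrightarrow{G}}$ in block form with diagonal blocks $\Lambda_i=L_{H_i}+(\sum_{(i,j)\in E(G)}m_j)I$ and off-diagonal blocks $-J_{m_i,m_j}$ or $0$, invoke Lemma~\ref{x2112}, and use $\mu_{m_i}(H_i)=0$ to remove one copy of the row sum from each $Spec(\Lambda_i)$. Your reading of the identity with multiplicities directly from the permuted block upper triangular form of $U^{*}MU$ in the proof of Lemma~\ref{x2112} just makes explicit what the paper assumes when it passes from the spectrum union to $f(L_{\overrightarrow{G}},\lambda)=f(B(L_{\overrightarrow{G}}),\lambda)\prod_{i} f(\Lambda_i,\lambda)/(\lambda-\sum_{(i,j)\in E(G)}m_j)$.
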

\begin{proof}
The Laplacian matrix of $\overrightarrow{G}$ can be partitioned as
\begin{displaymath}
L_{\overrightarrow{G}}=\left(\begin{matrix}
\Lambda_{1} &-A_{m_{1}\times m_{2}} &\cdots &-A_{m_{1}\times m_{n}}\\
-A_{m_{2}\times m_{1}} &\Lambda_{2} &\cdots &-A_{m_{2}\times m_{n}}\\
  \vdots &\vdots &\ddots &\vdots\\
  -A_{m_{n}\times m_{1}} &-A_{m_{n}\times m_{2}} &\cdots &\Lambda_{n}
   \end{matrix}
\right),
\end{displaymath}
where
 \begin{displaymath}
\begin{split}
 \Lambda_{i}=L_{H_{i}}+\sum_{(i,j)\in E(G)}m_{j}I,~{\rm for}~i=1,2,...,n,
  \end{split}
\end{displaymath}
and $A_{m_{i}\times m_{j}}=J_{m_{i},m_{j}}$ if $(i,j)\in E(G)$; $A_{m_{i}\times m_{j}}=\mathbf{0}_{m_{i}, m_{j}}$ otherwise, where $\mathbf{0}_{m_{i}, m_{j}}$ is the $m_{i}\times m_{j}$ zero matrix.

  Note that the equitable quotient matrix of   $L_{\overrightarrow{G}}$ can be seen in Equation (3.2). By Lemma \ref{x2112}, we have
$$Spec(L_{\overrightarrow{G}})=Spec(B(L_{\overrightarrow{G}}))\cup \bigcup_{i=1}^{n}\left(Spec(\Lambda_{i})\setminus \left\{\sum\nolimits_{(i,j)\in E(G)}m_{j}\right\}\right).$$
Therefore, \begin{displaymath}
\begin{split}
f(L_{\overrightarrow{G}},\lambda)&=
f(B(L_{\overrightarrow{G}}),\lambda)\prod_{i=1}^{n}\frac{f(\Lambda_{i},\lambda)}{(\lambda-\sum_{(i,j)\in E(G)}m_{j})}\\
&=f(B(L_{\overrightarrow{G}}),\lambda)\prod_{i=1}^{n}\prod_{k=1}^{m_{i}-1}\left(\lambda-\mu_{k}(H_{i})-\sum_{(i,j)\in E(G)}m_{j}\right).
  \end{split}
\end{displaymath}
{\hfill}
\end{proof}

 Now we   give an expression for enumerating  oriented spanning trees of $\overrightarrow{G}$  in terms of Laplacian eigenvalues of $H_{1},H_{2},...,H_{n}$ and oriented spanning trees of $G$ as follows.

\begin{theorem}\label{x139}
Let $\overrightarrow{G}=G[H_{1},H_{2},...,H_{n}]$  be a  generalized join digraph.  Then
\begin{displaymath}
\begin{split}
 t(\overrightarrow{G})
 =&\prod_{i\in V(G)}\prod_{k=1}^{m_{i}-1}\left(\mu_{k}(H_{i})+\sum_{(i,j)\in E(G)}m_{j}\right)\sum_{T\in \mathcal{T}(G)}\prod_{(i,j)\in E(T)}m_{j},
  \end{split}
\end{displaymath}
where $m_{i}=|V(H_{i})|$.
\end{theorem}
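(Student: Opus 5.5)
The plan is to combine Lemma \ref{x7} with Lemma \ref{x13145} through the coefficient of $\lambda$ in the characteristic polynomial. For any square matrix $L$ of order $N$ with $\det(L)=0$, the coefficient of $\lambda$ in $f(L,\lambda)=\det(\lambda I-L)$ equals $(-1)^{N-1}\sum_i \det(L(i,i))=(-1)^{N-1}tr(adj(L))$. By Lemma \ref{x7}, for $L=L_{\overrightarrow{G}}$ with $N=\sum_i m_i$, this is $(-1)^{N-1}t(\overrightarrow{G})$. So we will extract the linear coefficient of $f(L_{\overrightarrow{G}},\lambda)$ from the factorization in Lemma \ref{x13145}.

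The key observation is that $B(L_{\overrightarrow{G}})$ in (3.2) is exactly the Laplacian matrix of the weighted digraph $G^{m}$. This digraph is $G$ with each edge $(i,j)$ weighted by $m_j$: its diagonal entries $\sum_{(i,j)\in E(G)}m_j$ are the weighted outdegrees, and its off-diagonal entries are $-m_j$ on edges. Hence $f(B,\lambda)$ has zero constant term. By the same identity and Lemma \ref{x7} applied to $G^m$, its linear coefficient is
\[
(-1)^{n-1}t(G^m,\omega)=(-1)^{n-1}\sum_{T\in\mathcal{T}(G)}\prod_{(i,j)\in E(T)}m_j .
\]

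Write $f(L_{\overrightarrow{G}},\lambda)=f(B,\lambda)P(\lambda)$, where $P(\lambda)=\prod_i\prod_{k=1}^{m_i-1}\bigl(\lambda-\mu_k(H_i)-\sum_{(i,j)\in E(G)}m_j\bigr)$. Since $f(B,\lambda)=c_1\lambda+O(\lambda^2)$, the linear coefficient of the product is $c_1P(0)$. Here
\[
P(0)=(-1)^{N-n}\prod_i\prod_{k=1}^{m_i-1}\Bigl(\mu_k(H_i)+\sum_{(i,j)\in E(G)}m_j\Bigr).
\]
Comparing the two linear coefficients, the signs give $(-1)^{N-1}=(-1)^{n-1}(-1)^{N-n}$. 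Cancelling them yields the stated formula.

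The main point needing care is the sign and indexing bookkeeping. We must check that the $m_i-1$ eigenvalues removed in Lemma \ref{x13145} are $\mu_1(H_i),\dots,\mu_{m_i-1}(H_i)$, which holds by the convention $\mu_{m_i}(H_i)=0$. We must also justify the linear-coefficient identity $\frac{d}{d\lambda}\det(\lambda I-L)|_{\lambda=0}=\sum_i\det(-L(i,i))$, which follows from Jacobi's formula. No nonsingularity assumptions are needed, so the argument is purely polynomial.
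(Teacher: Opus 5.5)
Your proposal is correct and follows the paper's proof essentially step for step. Like the paper, you identify $B(L_{\overrightarrow{G}})$ as the Laplacian of $G$ with edge weights induced by $\{m_i\}$, use $f(B(L_{\overrightarrow{G}}),0)=0$ so that the linear coefficient (equivalently $f'(\cdot,0)$) of the factorization in Lemma \ref{x13145} is $P(0)\,f'(B(L_{\overrightarrow{G}}),0)$, and match signs via Lemma \ref{x7}; note only that the linear-coefficient identity holds for every square matrix, so the hypothesis $\det(L)=0$ is unnecessary there.
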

\begin{proof}
For the characteristic polynomial $f(A,\lambda)=|\lambda I_{n}-A|=\lambda^{n}+a_{n-1}\lambda^{n-1}+\cdots +a_{1}\lambda+a_{0}$, the coefficient of $\lambda$ is $(-1)^{n-1}tr(adj(A))$.

Note that  $B(L_{\overrightarrow{G}})$  defined in Lemma \ref{x13145} is the Laplacian  matrix of  a weighted digraph $G'$ weighting on $G$, and   weights of $G'$ are induced by indeterminates $\{m_{i}\}_{i\in V(G)}$. Set $N=\sum_{i=1}^{n}m_{i}$.  Therefore, by Lemma \ref{x7}, we have the two derivatives
\begin{displaymath}\tag{3.3}
\begin{split}
f'(L_{\overrightarrow{G}},0)=(-1)^{N-1}t(\overrightarrow{G}),
\end{split}
\end{displaymath}
\begin{displaymath}\tag{3.4}
\begin{split}
f'(B(L_{\overrightarrow{G}}),0)=(-1)^{n-1}\sum_{T\in \mathcal{T}(G)}\prod_{(i,j)\in E(T)}m_{j}.
\end{split}
\end{displaymath}
Since  $f(B(L_{\overrightarrow{G}}),0)=0$, by Lemma \ref{x13145}, we have
\begin{displaymath}
\begin{split}
f'(L_{\overrightarrow{G}},0)=(-1)^{N-n}\prod_{i\in V(G)}\prod_{k=1}^{m_{i}-1}\left(\mu_{k}(H_{i})+\sum_{(i,j)\in E(G)}m_{j}\right)f'(B(L_{\overrightarrow{G}}),0).
\end{split}
\end{displaymath}
Hence, from  Equations (3.3) and (3.4),  the result follows.
{\hfill}
\end{proof}

If each $|V(H_{i})|=m_{i}=m$ for $1\leq i\leq n$ in Theorem \ref{x139}, we obtain the following result.

\begin{corollary}\label{x123x}
Let $\overrightarrow{G}=G[H_{1},H_{2},...,H_{n}]$  be a  generalized join digraph with $|V(H_{i})|=m$ for $1\leq i\leq n$. Then
\begin{displaymath}
\begin{split}
t(\overrightarrow{G})
 =&m^{n-1}t(G)\prod_{i\in V(G)}\prod_{k=1}^{m-1}\left(\mu_{k}(H_{i})+md_{i}^{+}(G)\right).
  \end{split}
\end{displaymath}
\end{corollary}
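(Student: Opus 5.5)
This corollary is a direct specialization of Theorem \ref{x139}. We set $m_{i}=m$ for every $i\in V(G)$ and simplify the two factors of that formula separately.

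\textbf{The product over Laplacian eigenvalues.} For each $i$ the inner sum becomes
$$\sum_{(i,j)\in E(G)}m_{j}=m\,|N_{G}^{+}(i)|=m\,d_{i}^{+}(G).$$
Hence the product over $i\in V(G)$ and $1\le k\le m-1$ turns into
$$\prod_{i\in V(G)}\prod_{k=1}^{m-1}\left(\mu_{k}(H_{i})+m\,d_{i}^{+}(G)\right).$$
This uses only the definition of outdegree.

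\textbf{The weighted tree sum.} For each $T\in\mathcal{T}(G)$ the summand is $\prod_{(i,j)\in E(T)}m_{j}=m^{|E(T)|}$. By the definition of an oriented spanning tree, every non-root vertex has outdegree $1$ and the root has outdegree $0$, so $T$ has exactly $n-1$ edges. Each summand therefore equals $m^{n-1}$, and the sum is
$$\sum_{T\in\mathcal{T}(G)}\prod_{(i,j)\in E(T)}m_{j}=m^{n-1}|\mathcal{T}(G)|=m^{n-1}t(G).$$

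Multiplying the two simplified factors gives the stated formula.

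There is no real obstacle here. The only point to check is the edge count $|E(T)|=n-1$, which is immediate from the outdegree description of oriented spanning trees.
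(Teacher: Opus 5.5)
Your proposal is correct and is the same argument as the paper's, which obtains this corollary by setting $m_i=m$ in Theorem \ref{x139}. The two simplifications you verify are exactly what that specialization requires: $\sum_{(i,j)\in E(G)}m_j=m\,d_i^{+}(G)$, and $\prod_{(i,j)\in E(T)}m_j=m^{n-1}$ because every oriented spanning tree has $n-1$ edges.
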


\section{Counting oriented spanning trees  rooted in $G[H_{1},H_{2},...,H_{n}]$}

 In this section we consider    the number of oriented spanning trees  with a  fixed root in generalized join digraphs.
 Before giving main results, we shall introduce the following theorem.
\begin{theorem}{\rm (\cite{C4})}\label{xb}
Let $G$ be a weighted digraph with $\omega_{e}(G)>0$ for each $e\in E(G)$, and let $e^{*}=(u^{*},v^{*})$ be an edge of  $G$.

{\em (1)} If $d_{v^{*}}^{+}(G)=0$, then
$$t_{e^{*}}(\mathcal{L}(G),\omega)=\omega_{e^{*}}(G)t_{u^{*}}(G,\omega)\prod_{v^{*}\neq v\in V(G)}d_{v}(G)^{d_{v}^{-}(G)-1}.$$

{\em (2)} If $d_{v^{*}}^{+}(G)>0$, then
$$t_{e^{*}}(\mathcal{L}(G),\omega)=\omega_{e^{*}}(G)t_{u^{*}}(G,\omega)d_{v^{*}}(G)^{d_{v^{*}}^{-}(G)-2}\prod_{v^{*}\neq v\in V(G)}d_{v}(G)^{d_{v}^{-}(G)-1}.$$
\end{theorem}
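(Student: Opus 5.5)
The plan is to derive both formulas from the biclique-directed star transformation (Lemma \ref{x11}) applied once at every vertex of $G$, and then to count rooted trees directly in the resulting digraph, where the count becomes almost forced. First we record that $\mathcal{L}(G)$ is an edge-disjoint union of bicliques. For each $v\in V(G)$ let $Q_v$ be the biclique with $Q_v^{(1)}=\{e:h(e)=v\}$ and $Q_v^{(2)}=\{f:t(f)=v\}$. Every arc $e\to f$ of $\mathcal{L}(G)$ lies in exactly one $Q_v$, namely $v=h(e)=t(f)$. The weights of $\mathcal{L}(G)$ are induced by the vertex indeterminates $\{\omega_e(G)\}$, as Lemma \ref{x11} requires, and $\omega(Q_v)=\sum_{t(f)=v}\omega_f(G)=d_v(G)$.

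Next we apply Lemma \ref{x11} successively, once for each $v$ with $d_v^+(G)>0$, so that $\omega(Q_v)=d_v(G)>0$ by the positivity hypothesis. Each $Q_v$ is replaced by a star with centre $o_v$, arcs $e\to o_v$ of weight $d_v(G)$ for every $e$ with $h(e)=v$, and arcs $o_v\to f$ of weight $\omega_f(G)$ for every $f$ with $t(f)=v$. The bicliques are edge-disjoint and the new vertices touch only their own star, so each application is legitimate in the current digraph. Let $G^{**}$ be the final digraph. Iterating the first identity of Lemma \ref{x11} gives
$$t_{e^*}(G^{**},\omega)=\Big(\prod_{v:\,d_v^+(G)>0}d_v(G)\Big)\,t_{e^*}(\mathcal{L}(G),\omega).$$

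Now we count $\mathcal{T}_{e^*}(G^{**})$ directly. In $G^{**}$, every vertex $e=(u,v)$ of $\mathcal{L}(G)$ has a unique out-arc $e\to o_v$ of weight $d_v(G)$ if $d_v^+(G)>0$, and no out-arc otherwise. Hence in any tree rooted at $e^*$ each $e\neq e^*$ is forced to use that arc, and these forced arcs contribute $\prod_{e\neq e^*}d_{h(e)}(G)$. The remaining freedom is the choice, for each centre $o_v$, of one out-arc $o_v\to f$ with $t(f)=v$. Following $o_v\to f\to o_{h(f)}$, these choices define a map on the vertices of $G$. The only in-arc of $e^*$ comes from $o_{u^*}$, so $o_{u^*}$ must choose $e^*$, contributing $\omega_{e^*}(G)$. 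The choices at the other centres must be acyclic and lead to $u^*$, that is, they form an oriented spanning tree of $G$ rooted at $u^*$; conversely, each such tree gives a valid tree of $G^{**}$. This bijection gives
$$t_{e^*}(G^{**},\omega)=\omega_{e^*}(G)\,t_{u^*}(G,\omega)\prod_{e\neq e^*}d_{h(e)}(G).$$

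Finally, divide by $\prod_{d_v^+>0}d_v(G)$. For $v\neq v^*$ the exponent of $d_v(G)$ is $d_v^-(G)-1$. For $v^*$ the numerator exponent is $d_{v^*}^-(G)-1$, since $e^*$ is excluded. This becomes $d_{v^*}^-(G)-2$ when $d_{v^*}^+>0$, which is case (2), and stays $d_{v^*}^-(G)-1$ when $d_{v^*}^+=0$. In the latter case $v^*$ has no centre, and the factor $d_{v^*}(G)^{d_{v^*}^-(G)-1}$ must be absent from case (1). Indeed, if another edge enters $v^*$ then that edge is a sink in $G^{**}$ and both sides vanish, because $t_{u^*}(G,\omega)=0$ when $v^*\neq u^*$ is a sink of $G$.

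The main obstacle is this bookkeeping of degenerate vertices with $d_v^+(G)=0$, where Lemma \ref{x11} cannot be applied and factors $0^{0}$ appear. I would handle it by checking separately that both sides vanish whenever some edge $e\neq e^*$ has a head of out-degree $0$. In that situation $e$ is a non-root sink in $G^{**}$, so the left side is $0$, and $h(e)$ is a sink of $G$ distinct from $u^*$ (unless it is a loop situation, checked directly), so $t_{u^*}(G,\omega)=0$. The verification of the bijection in the third step is routine but must be written carefully.
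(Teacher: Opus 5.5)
The paper does not prove this statement. It is quoted from \cite{C4}, and only part (2) is ever used, in the proof of Theorem \ref{x13}.

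\textbf{Part (2).} Your argument is sound. You expand each biclique $Q_v$ of $\mathcal{L}(G)$ into a star via Lemma \ref{x11}. In $G^{**}$ the out-arc of every edge-vertex is then forced, and the centre choices correspond bijectively to $\mathcal{T}_{u^*}(G)$. This gives exactly formula (2), tacitly assuming $G$ has no isolated vertex; for an isolated vertex the right-hand side contains $0^{-1}$ anyway. Your route is essentially the reverse of the paper's proof of Theorem \ref{x13}, where stars are Schur-complemented back onto a line digraph.

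\textbf{Part (1): a genuine gap.} Your bijection step says the choices at the centres other than $o_{u^*}$ form an oriented spanning tree of $G$ rooted at $u^*$. That requires every vertex $v\neq u^*$ to carry a centre, and in case (1) the vertex $v^*$ has none. Your fallback (another arc entering $v^*$ makes both sides vanish) only covers $d^-_{v^*}(G)\ge 2$.

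Suppose instead that $e^*$ is the only arc entering $v^*$, and no other sink receives an arc. Then the chosen arcs, including $o_{u^*}\to e^*$, are exactly the oriented spanning trees of $G$ rooted at $v^*$. So your own computation yields
$$t_{e^*}(\mathcal{L}(G),\omega)=t_{v^*}(G,\omega)\prod_{v^*\neq v\in V(G)}d_v(G)^{d_v^-(G)-1},\qquad t_{v^*}(G,\omega)=\omega_{e^*}(G)\,t_{u^*}(G-v^*,\omega).$$
This is in general nonzero. But, as you observe yourself, $t_{u^*}(G,\omega)=0$ throughout case (1), because $v^*\neq u^*$ is a sink.

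Hence identity (1) as printed is false, and no argument can establish it. Take $G$ to be the single arc $u^*\to v^*$. Then $\mathcal{L}(G)$ is one vertex, so the left side is $1$, while the right side is $\omega_{e^*}(G)\cdot 0\cdot \omega_{e^*}(G)^{-1}=0$. With unit weights, $G$ with arcs $a\to u^*$, $u^*\to a$, $u^*\to v^*$ also gives $1\neq 0$.

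The displayed formula is the correct replacement in this subcase. It is consistent with Theorem \ref{x1}, since $e^*$ is then the unique sink of $\mathcal{L}(G)$ and $t(G,\omega)=t_{v^*}(G,\omega)$. Your proposal should have caught this contradiction rather than asserting the bijection. Because the paper only invokes part (2), its own results are unaffected.
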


Now we  give an expression of $\sum_{u^{*}\in V(H_{i^{*}})}t_{u^{*}}(\overrightarrow{G})$ for  $i^{*}\in V(G)$ in terms of Laplacian eigenvalues of $H_{1},H_{2},...,H_{n}$ and oriented spanning trees of $G$ as follows.
\begin{theorem}\label{x13}
Let $\overrightarrow{G}=G[H_{1},H_{2},...,H_{n}]$  be a  generalized join digraph with $|V(H_{i})|=m_{i}$ for $1\leq i\leq n$ and $i^{*}\in V(G)$. Then
\begin{displaymath}
\begin{split}
\sum_{u^{*}\in V(H_{i^{*}})} t_{u^{*}}(\overrightarrow{G})
 =&\prod_{i\in V(G)}\prod_{k=1}^{m_{i}-1}\left(\mu_{k}(H_{i})+\sum_{(i,j)\in E(G)}m_{j}\right)\sum_{T\in \mathcal{T}_{i^{*}}(G)}\prod_{(i,j)\in E(T)}m_{j}.
  \end{split}
\end{displaymath}

\end{theorem}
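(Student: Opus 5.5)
The plan is to avoid heavy Schur-complement bookkeeping and instead compare left null vectors of $L_{\overrightarrow{G}}$ and of its equitable quotient $B(L_{\overrightarrow{G}})$, then fix the scaling constant with Theorem \ref{x139}. As in the proof of Theorem \ref{x139}, let $G'$ be the weighted digraph on $V(G)$ whose weights are induced by $\{m_i\}_{i\in V(G)}$, so that $B(L_{\overrightarrow{G}})=L_{G'}$. Then
$$t_{i}(G',\omega)=\sum_{T\in\mathcal{T}_{i}(G)}\prod_{(i,j)\in E(T)}m_j,$$
and the claim reads
$$\sum_{u\in V(H_{i^*})}t_u(\overrightarrow{G})=P\cdot t_{i^*}(G',\omega),\qquad P=\prod_{i}\prod_{k=1}^{m_i-1}\Bigl(\mu_k(H_i)+\sum_{(i,j)\in E(G)}m_j\Bigr).$$

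\textbf{Step 1 (left null vector of $L_{\overrightarrow{G}}$).} By Lemma \ref{x7}, $adj(L_{\overrightarrow{G}})$ has $(u,v)$-entry $t_v(\overrightarrow{G})$, because $(-1)^{u+v}\det L(v,u)=t_v$. Write $x=(t_v(\overrightarrow{G}))_{v}$. As in (2.3), $adj(L)L=\det(L)I=0$ gives $x^{\top}L_{\overrightarrow{G}}=0$. The same argument applied to $G'$ shows that $z=(t_i(G',\omega))_i$ satisfies $z^{\top}L_{G'}=0$.

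\textbf{Step 2 (push $x$ down to the quotient).} Let $\mathbf 1_{S_j}$ be the indicator vector of $V(H_j)$. Every block of $L_{\overrightarrow{G}}$ has constant row sums, as shown in the proof of Lemma \ref{x13145}. Hence the $i$-th block of $L_{\overrightarrow{G}}\mathbf 1_{S_j}$ equals $b_{ij}\mathrm{j}_{m_i}$, where $B(L_{\overrightarrow{G}})=(b_{ij})$. Define $y_i=\sum_{u\in V(H_i)}t_u(\overrightarrow{G})$. Then
$$0=x^{\top}L_{\overrightarrow{G}}\mathbf 1_{S_j}=\sum_i y_i b_{ij}\quad\text{for every } j,$$
so $y^{\top}L_{G'}=0$.

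\textbf{Step 3 (proportionality and the constant).} Suppose $t(G',\omega)=\sum_i z_i\neq0$. Then $adj(L_{G'})\neq0$, so $L_{G'}$ has rank exactly $n-1$ and its left kernel is spanned by $z$. Hence $y=cz$ for some scalar $c$. Summing coordinates gives
$$t(\overrightarrow{G})=\sum_i y_i=c\,t(G',\omega).$$
Theorem \ref{x139} states $t(\overrightarrow{G})=P\,t(G',\omega)$, so $c=P$. Reading off coordinate $i^*$ gives $y_{i^*}=P\,z_{i^*}$, which is the statement.

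\textbf{Step 4 (degenerate case).} Suppose $t(G',\omega)=0$. All $z_i\ge0$ since the weights are positive, so $t_{i^*}(G',\omega)=0$ and the right-hand side vanishes. Theorem \ref{x139} gives $t(\overrightarrow{G})=0$, and since every $t_u(\overrightarrow{G})\ge0$, each $t_u(\overrightarrow{G})$ is $0$. So the left-hand side vanishes as well.

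The main obstacle is Step 3. It needs care that the left kernel of $L_{G'}$ is one-dimensional exactly when $t(G',\omega)\neq0$, and that the identification $X_{uv}=t_v$ uses the correct sign convention from Lemma \ref{x7}.

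An alternative route, closer to the machinery of Section 2, is also available. One would replace each biclique $V(H_i)\to V(H_j)$ with $(i,j)\in E(G)$ by a directed star via Lemma \ref{xsh-1}, then take a Schur complement onto the star centres and $V(H_{i^*})$ using Proposition \ref{x9}. I would fall back on this only if the null-vector argument hit a snag, since evaluating the determinants of the eliminated blocks is considerably messier.
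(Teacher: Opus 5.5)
Your proof is correct, and it takes a genuinely different route from the paper's.

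\textbf{The paper's route.} It replaces every biclique $V(H_i)\to V(H_j)$ by a weighted directed star (Lemma \ref{xsh-1}) and takes the Schur complement of $D_2=\mathrm{diag}(\Lambda_1,\dots,\Lambda_n)$. It recognises the result as the Laplacian of a weighted line digraph $\mathcal{L}(G^{*})$ with edge weights $m_im_j/\sum_{(i,\ell)\in E(G)}m_\ell$. It then evaluates $t_{e^*}(\mathcal{L}(G^*),\omega)$ with the Levine-type formula of Theorem \ref{xb}(2) and simplifies. The spectral factor $\prod_i\prod_k(\mu_k(H_i)+\sum_{(i,j)\in E(G)}m_j)$ comes directly out of $\det(D_2)$. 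The case where some $d_i^+(G)=0$ must be handled separately, since $D_2$ is then singular.

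\textbf{Your route.} You use that $(t_v(\overrightarrow{G}))_v$ is a left null vector of $L_{\overrightarrow{G}}$, and push it through the equitable partition to a left null vector of $L_{G'}=B(L_{\overrightarrow{G}})$. When $t(G',\omega)\neq 0$ that left kernel is spanned by $(t_i(G',\omega))_i$. You fix the proportionality constant with Theorem \ref{x139}, and settle the degenerate case by nonnegativity.

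\textbf{What each buys.} Your argument is much shorter and needs neither line digraphs nor Theorem \ref{xb}. Zero out-degrees cause no trouble unless $t(G',\omega)=0$. It also uses only that $\{V(H_i)\}$ is an equitable partition for $L_{\overrightarrow{G}}$, so it would transfer to any such partition once the ratio $t(\overrightarrow{G})/t(G')$ is known. The price is that it is not independent of Theorem \ref{x139}: the spectral factor is imported from the characteristic-polynomial computation, not produced by the proof. The paper's Schur-complement route derives that factor directly. As the paper remarks, it can in turn recover Theorem \ref{x139} when all out-degrees of $G$ are positive, and it showcases the biclique-to-star transformation as a tool in its own right.
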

\begin{proof}
 First, we consider that $d_{i}^{+}(G)>0$ for each $i\in V(G)$. For each $e=(i,j)\in E(G)$, $\overrightarrow{G}$ contains a  biclique $Q_{e}$ as a sub-digraph with vertex set $V(H_{i})\cup V(H_{j})$. By Lemma \ref{xsh-1}, we  can  replace each  biclique $Q_{e}$ in $\overrightarrow{G}$ with  a weighted directed star $K_{1,|Q_{e}|}$  with vertex set $\{e\}\cup V(H_{i})\cup  V(H_{j})$, edge set $\{(u,e):u\in V(H_{i})\}\cup \{(e,v):v\in V(H_{j})\}$ and the weight of every edge of  $K_{1,|Q_{e}|}$  is defined in Equation (2.5).
 Then we obtain a weighted digraph $\overrightarrow{G}'$  with vertex set $E(G)\cup V(H_{1})\cup\cdots \cup V(H_{n})$. By Lemma \ref{xsh-1},  for  two vertices $u^{*}\in V(H_{i^{*}})$  and  $e^{*}=(i^{*},j^{*})\in E(G)\subseteq V(\overrightarrow{G}')$ in $\overrightarrow{G}'$, we have
 \begin{displaymath}
\begin{split}
 t_{u^{*}}(\overrightarrow{G}',\omega)=\prod_{e\in E(G)}|Q_{e}^{(2)}| t_{u^{*}}(\overrightarrow{G}),
  \end{split}
\end{displaymath}
 \begin{displaymath}\tag{4.1}
\begin{split}
 t_{e^{*}}(\overrightarrow{G}',\omega)=\sum_{u^{*}\in V(H_{i^{*}})}t_{u^{*}}(\overrightarrow{G}',\omega)=\prod_{e\in E(G)}|Q_{e}^{(2)}|\sum_{u^{*}\in V(H_{i^{*}})} t_{u^{*}}(\overrightarrow{G}).
  \end{split}
\end{displaymath}

The Laplacian matrix of $\overrightarrow{G}'$ is $L_{\overrightarrow{G}'}=\left(\begin{matrix}
D_{1} &-B\\
-C &D_{2}\\
\end{matrix}
\right),$
where $D_{1}$ is a diagonal matrix  corresponding to $E(G)$ with diagonal entries $(D_{1})_{ee}=|Q_{e}^{(2)}|$ for $e\in E(G)$, $D_{2}=diag(\Lambda_{1},\Lambda_{2},...,\Lambda_{n})$ is a block diagonal matrix such that
 \begin{displaymath}
\begin{split}
 \Lambda_{i}=L_{H_{i}}+\sum_{e\in E(G),t(e)=i}|Q_{e}^{(2)}|I,
  \end{split}
\end{displaymath}
$B$ is a $|E(G)|\times \sum_{i=1}^{n}m_{i}$ matrix with entries $(B)_{ev}=1$ if $v\in V(H_{j})$ and $e=(i,j)\in E(G)$, and $(B)_{ev}=0$ otherwise, $C$ is a $ \sum_{i=1}^{n}m_{i} \times|E(G)|$ matrix with entries $(C)_{ue}=|Q_{e}^{(2)}|$ if $u\in V(H_{i})$ and $e=(i,j)\in E(G)$, and $(C)_{ue}=0$ otherwise.

Since $d_{i}^{+}(G)>0$ for each $i\in V(G)$,  $D_{2}$ is nonsingular.    The Schur complement of $D_{2}$ in $L_{\overrightarrow{G}'}$ is $S=D_{1}-BD_{2}^{-1}C$. Suppose that $h(f_{1})=t(f_{2})=j$ for $f_{1},f_{2}\in E(G)$ with $f_{2}=(j,q)$ and  the column (row) partition of the matrix $B$ ($C$) is in keeping with  the row (column) partition of $D=diag(\Lambda_{1},\Lambda_{2},...,\Lambda_{n})$. Note that $\Lambda_{j}\mathrm{j}_{m_{j}}=\sum_{e\in E(G),t(e)=j}|Q_{e}^{(2)}|\mathrm{j}_{m_{j}}$. By computation, we have
\begin{displaymath}
\begin{split}
  (BD_{2}^{-1}C)_{f_{1}f_{2}}&=|Q_{f_{2}}^{(2)}|\mathrm{j}_{m_{j}}^{\top}\Lambda_{j}^{-1}\mathrm{j}_{m_{j}}
  =\frac{m_{j}|Q_{f_{2}}^{(2)}|}{\sum_{e\in E(G),t(e)=j}|Q_{e}^{(2)}|}=\frac{m_{j}m_{q}}{\sum_{(j,\ell)\in E(G)}m_{\ell}}.
  \end{split}
\end{displaymath}
If $h(f_{1})\neq t(f_{2})$, then $(BD_{2}^{-1}C)_{f_{1}f_{2}}=0$.

Let $G^{*}$ be a weighted digraph of $G$ with the weight $\omega_{e}(G^{*})=\frac{m_{i}m_{j}}{\sum_{(i,\ell)\in E(G)}m_{\ell}}$ for each $e=(i,j) \in E(G)$.    For $h(f_{1})=t(f_{2})=j$ with $f_{1},f_{2}\in E(G)$ and $f_{2}=(j,q)$, we  assign weight $\omega_{f_{2}}(G^{*})$ to the edge $(f_{1},f_{2})$ in the weighted line digraph $\mathcal{L}(G^{*})$ of $G^{*}$. Then the weights of $\mathcal{L}(G^{*})$ are induced by indeterminates $\{\omega_{e}(G^{*})\}_{e\in E(G)}$ and  $S$ is the Laplacian matrix of the weighted line digraph $\mathcal{L}(G^{*})$. Note that $d_{i}(G^{*})=m_{i}$. By Proposition \ref{x9} and  Theorem \ref{xb} $(2)$, for any vertex  $e^{*}=(i^{*},j^{*})\in E(G)$ of $\overrightarrow{G}'$ with $d_{j^{*}}^{+}(G)>0$, we have
\begin{displaymath}
\begin{split}
 t_{e^{*}}(\overrightarrow{G}',\omega)=&det(D_{2})t_{e^{*}}(\mathcal{L}(G^{*}),\omega)\\
 =&\prod_{i\in V(G)} det(\Lambda_{i})\cdot\omega_{e^{*}}(G^{*})t_{i^{*}}(G^{*},\omega)d_{j^{*}}(G^{*})^{d_{j^{*}}^{-}(G)-2}\prod_{\substack{i\in V(G)\\ i\neq j^{*}}}d_{i}(G^{*})^{d_{i}^{-}(G)-1}\\
 =&\prod_{i\in V(G)}\left(\sum_{(i,j)\in E(G)}m_{j}\cdot\prod_{k=1}^{m_{i}-1}\left(\mu_{k}(H_{i})+\sum_{(i,j)\in E(G)}m_{j}\right)\right)\\
 &\cdot\frac{m_{i^{*}}}{\sum_{(i^{*},j)\in E(G)}m_{j}}\prod_{i\in V(G)}m_{i}^{d_{i}^{-}(G)-1}t_{i^{*}}(G^{*},\omega)\\
 =&\prod_{i\in V(G)}\left(\sum_{(i,j)\in E(G)}m_{j}\cdot\prod_{k=1}^{m_{i}-1}\left(\mu_{k}(H_{i})+\sum_{(i,j)\in E(G)}m_{j}\right)m_{i}^{d_{i}^{-}(G)-1}\right)\\
 &\cdot\frac{m_{i^{*}}}{\sum_{(i^{*},j)\in E(G)}m_{j}}
 \cdot\sum_{T\in \mathcal{T}_{i^{*}}(G)}\prod_{(i,j)\in E(T)}\frac{m_{i}m_{j}}{\sum_{(i,\ell)\in E(G)}m_{\ell}}.\\
 \end{split}
\end{displaymath}
 Since $\prod_{(i,j)\in E(T)}\frac{m_{i}}{\sum_{(i,\ell)\in E(G)}m_{\ell}}=\prod_{i\in V(G), i\neq i^{*}}\frac{m_{i}}{\sum_{(i,\ell)\in E(G)}m_{\ell}}$ for any $T\in \mathcal{T}_{i^{*}}$,  the above equation equals
\begin{displaymath}
\begin{split}
&\prod_{i\in V(G)}\left(\sum_{(i,j)\in E(G)}m_{j}\cdot\prod_{k=1}^{m_{i}-1}\left(\mu_{k}(H_{i})+\sum_{(i,j)\in E(G)}m_{j}\right)m_{i}^{d_{i}^{-}(G)-1}\right)\\
 &\cdot\prod_{i\in V(G)}\frac{m_{i}}{\sum_{(i,j)\in E(G)}m_{j}}
 \sum_{T\in \mathcal{T}_{i^{*}}(G)}\prod_{(i,j)\in E(T)}m_{j}\\
 =&\prod_{i\in V(G)}\left(\prod_{k=1}^{m_{i}-1}\left(\mu_{k}(H_{i})+\sum_{(i,j)\in E(G)}m_{j}\right)m_{i}^{d_{i}^{-}(G)}\right)\cdot \sum_{T\in \mathcal{T}_{i^{*}}(G)}\prod_{(i,j)\in E(T)}m_{j}.
 \end{split}
\end{displaymath}
  Note that $\prod_{e\in E(G)}|Q_{e}^{(2)}|=\prod_{(i,j)\in E(G)}m_{j}=\prod_{i\in V(G)}m_{i}^{d_{i}^{-}(G)}$. Hence, the result follows from Equation (4.1).

Finally, we consider that $d_{i'}^{+}(G)=0$ for some $i'\in V(G)$.   If $i'=i^{*}$, we have $t(\overrightarrow{G})=\sum_{u^{*}\in V(H_{i^{*}})}t_{u^{*}}(\overrightarrow{G}) $ and $\mathcal{T}(G)=\mathcal{T}_{i^{*}}(G)$, the result follows from Theorem \ref{x139}. If $i'\neq i^{*}$, we have $\sum_{u^{*}\in V(H_{i^{*}})}t_{u^{*}}(\overrightarrow{G})=0$ and $\mathcal{T}_{i^{*}}(G)=\emptyset$, the result also holds.
{\hfill}
\end{proof}

It is worth mentioning that we can also deduce Theorem \ref{x139} from Theorem \ref{x13} by $t(\overrightarrow{G})=\sum_{i^{*}\in V(G)}\sum_{u^{*}\in V(H_{i^{*}})}t_{u^{*}}(\overrightarrow{G})$ when  $d_{i}^{+}(G)>0$ for each $i\in V(G)$.

In order to obtain the enumerative formula of oriented spanning trees with a fixed root in the generalized join digraph $G[H_{1},H_{2},...,H_{n}]$ when $H_{1}, H_{2},...,H_{n}$ are  some special  graphs, we need the following preliminary result.

\begin{lemma}\label{xsh3}
Let $G$ be a digraph with $u^{*},v^{*}\in V(G)$ and   $ N_{G}^{-}(u^{*})\setminus \{v^{*}\}= N_{G}^{-}(v^{*})\setminus \{u^{*}\}$. Then
\begin{displaymath}
\left(d_{u^{*}}^{+}(G)+x_{u^{*}v^{*}}\right)t_{u^{*}}(G)
=
\left(d_{v^{*}}^{+}(G)+x_{v^{*}u^{*}}\right)t_{v^{*}}(G),
\end{displaymath}
where \begin{displaymath}
x_{uv}=
   \begin{cases}
   1 &\mbox {\rm if $(u,v)\in E(G)$}\\
 0  &\mbox {\rm if $(u,v)\notin E(G)$}
   \end{cases}
\end{displaymath}
for $u,v\in V(G)$.
\end{lemma}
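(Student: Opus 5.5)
The plan is to reuse the adjugate identity that appeared in the proof of Lemma \ref{x11}, applied now to the unweighted Laplacian $L_{G}$. Let $X=adj(L_{G})$. Since $det(L_{G})=0$, we have $XL_{G}=0$. By Lemma \ref{x7}, $(X)_{uv}=(-1)^{u+v}det(L_{G}(v,u))=t_{v}(G)$ for all $u,v$, so every row of $X$ equals the vector $(t_{v}(G))_{v\in V(G)}$. Reading off one row of $XL_{G}=0$ therefore gives, for every vertex $w\in V(G)$, the balance equation
\begin{displaymath}
d_{w}^{+}(G)\,t_{w}(G)=\sum_{v\in N_{G}^{-}(w)}t_{v}(G),
\end{displaymath}
because the $w$-th column of $L_{G}$ has diagonal entry $d_{w}^{+}(G)$ and entry $-1$ in each row $v$ with $(v,w)\in E(G)$.

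Next I will write this equation for $w=u^{*}$ and for $w=v^{*}$. Put $S=N_{G}^{-}(u^{*})\setminus\{v^{*}\}=N_{G}^{-}(v^{*})\setminus\{u^{*}\}$. Digraphs here have no loops, so $S$ contains neither $u^{*}$ nor $v^{*}$. Also, $v^{*}\in N_{G}^{-}(u^{*})$ exactly when $x_{v^{*}u^{*}}=1$, and $u^{*}\in N_{G}^{-}(v^{*})$ exactly when $x_{u^{*}v^{*}}=1$. The two balance equations thus become
\begin{displaymath}
d_{u^{*}}^{+}(G)t_{u^{*}}(G)=\sum_{v\in S}t_{v}(G)+x_{v^{*}u^{*}}t_{v^{*}}(G),\qquad d_{v^{*}}^{+}(G)t_{v^{*}}(G)=\sum_{v\in S}t_{v}(G)+x_{u^{*}v^{*}}t_{u^{*}}(G).
\end{displaymath}

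Subtracting the second equation from the first cancels the common sum over $S$. Moving the cross terms to the appropriate sides gives
\begin{displaymath}
\left(d_{u^{*}}^{+}(G)+x_{u^{*}v^{*}}\right)t_{u^{*}}(G)=\left(d_{v^{*}}^{+}(G)+x_{v^{*}u^{*}}\right)t_{v^{*}}(G),
\end{displaymath}
which is the claim.

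The only step needing care is the first: identifying the rows of $adj(L_{G})$ with the rooted tree counts, with the correct sign and the correct transpose convention. Lemma \ref{x7} settles this exactly as in equation (2.4). The rest is bookkeeping of the in-neighbourhoods, using the hypothesis to match the two sums term by term.
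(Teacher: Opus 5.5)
Your proof is correct and is essentially the paper's argument. The paper also takes $X=adj(L_{G})$ with $XL_{G}=0$ and $(X)_{uv}=t_{v}(G)$ from Lemma \ref{x7}, writes the balance equations at the columns $u^{*}$ and $v^{*}$ (its (4.4) and (4.5)), and subtracts them so that the common sum over $N_{G}^{-}(u^{*})\setminus\{v^{*}\}=N_{G}^{-}(v^{*})\setminus\{u^{*}\}$ cancels.
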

\begin{proof}
Since  $u^{*},v^{*} \in V(G)$ satisfy $ N_{G}^{-}(u^{*})\setminus \{v^{*}\}= N_{G}^{-}(v^{*})\setminus \{u^{*}\}$, then
the Laplacian matrix of $G$ can be written as
$$L_{G}=\left(\begin{matrix}
d_{u^{*}}^{+}(G) &-x_{u^{*}v^{*}} &-\alpha^{\top}\\
-x_{v^{*}u^{*}} &d_{v^{*}}^{+}(G) &-\beta^{\top}\\
-\gamma &-\gamma &L_{1}\\
\end{matrix}
\right),$$
where $L_{1}$ is the principal submatrix of $L_{G}$ corresponding to $V(G)\setminus \{u^{*},v^{*}\}$, and $\alpha,\beta,\gamma$ are three column vectors such that
\begin{displaymath}
  \alpha_{u}= \begin{cases}
1 &\mbox{if  $u\in N_{G}^{+}(u^{*})\setminus \{v^{*}\}$},\\
0 &\mbox{otherwise};\\
    \end{cases}
\end{displaymath}

   \begin{displaymath}
   \beta_{u}= \begin{cases}
1 &\mbox{if  $u\in N_{G}^{+}(v^{*})\setminus\{u^{*}\}$},\\
0 &\mbox{otherwise};\\
   \end{cases}
\end{displaymath}

 \begin{displaymath}
   \gamma_{u}= \begin{cases}
1 &\mbox{if  $u\in N_{G}^{-}(u^{*})\setminus \{v^{*}\}=N_{G}^{-}(v^{*})\setminus \{u^{*}\}$},\\
0 &\mbox{otherwise}.\\
   \end{cases}
\end{displaymath}

Let $X$ be the adjoint matrix of $L_{G}$. Then
\begin{displaymath}\tag{4.2}
 XL_{G}=det(L_{G})I=0.
\end{displaymath}
By Lemma \ref{x7}, we get
\begin{displaymath}\tag{4.3}
 (X)_{uv}=t_{v}(G),~{\rm for~any~}u,v\in V(G).
\end{displaymath}
Hence,  by Equations (4.2) and (4.3),  we have
\begin{displaymath}\tag{4.4}
 \sum_{v\in V(G)}t_{v}(G)(L_{G})_{vu^{*}}=t_{u^{*}}(G)d_{u^{*}}^{+}(G)-x_{v^{*}u^{*}}t_{v^{*}}(G)-\sum_{v\in N_{G}^{-}(u^{*})\setminus \{v^{*}\}}t_{v}(G)=0
\end{displaymath}
and
\begin{displaymath}\tag{4.5}
\sum_{v\in V(G)}t_{v}(G)(L_{G})_{vv^{*}}=t_{v^{*}}(G)d_{v^{*}}^{+}(G)-x_{u^{*}v^{*}}t_{u^{*}}(G)-\sum_{v\in N_{G}^{-}(v^{*})\setminus \{u^{*}\}}t_{v}(G)=0.
\end{displaymath}

By Equations (4.4) and (4.5),  we have $t_{u^{*}}(G)d_{u^{*}}^{+}(G)-x_{v^{*}u^{*}}t_{v^{*}}(G)=t_{v^{*}}(G)d_{v^{*}}^{+}(G)-x_{u^{*}v^{*}}t_{u^{*}}(G)$, the result follows.
{\hfill}
\end{proof}

When each $H_{i}$ is an independent set  of order $m_{i}$, the enumerative formula of oriented spanning trees with a fixed root in the generalized join digraph $G[H_{1},H_{2},...,H_{n}]$  can be obtained as follows.

\begin{theorem}\label{x131}
Let $\overrightarrow{G}=G[H_{1},H_{2},...,H_{n}]$  be a  generalized join digraph, where  $H_{i}$ is an independent set of order $m_{i}$. For $u^{*}\in V(H_{i^{*}})$ and $i^{*}\in V(G)$, we have
\begin{displaymath}
\begin{split}
 t_{u^{*}}(\overrightarrow{G})
 =&\frac{1}{m_{i^{*}}}\prod_{i\in V(G)}\prod_{k=1}^{m_{i}-1}\sum_{(i,j)\in E(G)}m_{j}\sum_{T\in \mathcal{T}_{i^{*}}(G)}\prod_{(i,j)\in E(T)}m_{j}.
  \end{split}
\end{displaymath}
\end{theorem}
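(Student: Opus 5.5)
The plan is to combine Theorem \ref{x13} with Lemma \ref{xsh3}. When $H_{i}$ is an independent set, $L_{H_{i}}$ is the zero matrix, so $\mu_{k}(H_{i})=0$ for every $k$. Theorem \ref{x13} then gives
\[
\sum_{u\in V(H_{i^{*}})}t_{u}(\overrightarrow{G})=\prod_{i\in V(G)}\prod_{k=1}^{m_{i}-1}\sum_{(i,j)\in E(G)}m_{j}\sum_{T\in \mathcal{T}_{i^{*}}(G)}\prod_{(i,j)\in E(T)}m_{j}.
\]
It therefore suffices to show that $t_{u}(\overrightarrow{G})$ takes the same value for every $u\in V(H_{i^{*}})$. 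Dividing the sum by $m_{i^{*}}$ then gives the stated formula.

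To prove this equality I will apply Lemma \ref{xsh3} to two distinct vertices $u^{*},v^{*}\in V(H_{i^{*}})$.

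\emph{Hypothesis of Lemma \ref{xsh3}.} By the definition of the generalized join, every in-neighbour of $u^{*}$ lies in some $H_{j}$ with $(j,i^{*})\in E(G)$, and conversely every vertex of such an $H_{j}$ is an in-neighbour of $u^{*}$. The same description holds for $v^{*}$, so $N^{-}_{\overrightarrow{G}}(u^{*})=N^{-}_{\overrightarrow{G}}(v^{*})$. This set contains neither $u^{*}$ nor $v^{*}$: $H_{i^{*}}$ has no edges, and $G$ has no loops, so $(i^{*},i^{*})\notin E(G)$.

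\emph{The extra terms.} Since $u^{*}$ and $v^{*}$ are non-adjacent, $x_{u^{*}v^{*}}=x_{v^{*}u^{*}}=0$.

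\emph{The outdegrees.} Both vertices have the same outdegree, namely $d^{+}_{u^{*}}(\overrightarrow{G})=d^{+}_{v^{*}}(\overrightarrow{G})=\sum_{(i^{*},j)\in E(G)}m_{j}$.

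Lemma \ref{xsh3} now yields $d\,t_{u^{*}}(\overrightarrow{G})=d\,t_{v^{*}}(\overrightarrow{G})$, where $d=\sum_{(i^{*},j)\in E(G)}m_{j}$.

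The main obstacle is the degenerate case $d=0$, that is, $d^{+}_{i^{*}}(G)=0$, where we cannot divide by $d$. I would treat it separately.

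\emph{Case $m_{i^{*}}\ge 2$.} In this case no oriented spanning tree exists: every vertex of $H_{i^{*}}$ has outdegree $0$ in $\overrightarrow{G}$, but a tree has only one root. So each $t_{u}(\overrightarrow{G})=0$. The right-hand side is also $0$, because the product over $i=i^{*}$ contains the factor $\sum_{(i^{*},j)\in E(G)}m_{j}=0$ at least once, as $m_{i^{*}}-1\ge1$.

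\emph{Case $m_{i^{*}}=1$.} Here the sum consists of a single term, so no division is needed and the formula is exactly Theorem \ref{x13}.

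The same split handles the case $m_{i^{*}}=1$ with $d>0$, which is trivial. In all remaining cases the equality of the $t_{u}(\overrightarrow{G})$ follows from Lemma \ref{xsh3}, and dividing the formula of Theorem \ref{x13} by $m_{i^{*}}$ completes the proof.
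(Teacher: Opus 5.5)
Your proof is correct and has the same overall structure as the paper's: put $\mu_k(H_i)=0$ into Theorem~\ref{x13}, then show that $t_u(\overrightarrow{G})$ is constant on $V(H_{i^*})$. The difference lies only in how you establish that constancy.

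The paper notes that any two vertices of the independent set $H_{i^*}$ are non-adjacent and have identical in- and out-neighbourhoods in $\overrightarrow{G}$. So the transposition of $u^*$ and $v^*$ is an automorphism of $\overrightarrow{G}$, which gives $t_{u^*}(\overrightarrow{G})=t_{v^*}(\overrightarrow{G})$ directly in every case.

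You instead apply Lemma~\ref{xsh3}, which only yields $d\,t_{u^*}(\overrightarrow{G})=d\,t_{v^*}(\overrightarrow{G})$ with $d=\sum_{(i^*,j)\in E(G)}m_j$. This is why you need a separate case for $d=0$, and you handle it correctly. If $m_{i^*}\ge 2$, the vertices of $H_{i^*}$ are all sinks, so no oriented spanning tree exists, and the factor $\prod_{k=1}^{m_{i^*}-1}0$ makes the right-hand side vanish. If $m_{i^*}=1$, the statement is exactly Theorem~\ref{x13}.

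The automorphism argument is shorter and avoids the case split. Your route uses the same linear-relation technique the paper needs anyway in Theorems~\ref{x1312} and~\ref{x1313}, where symmetry alone is not enough. Here it costs only the extra degenerate-case check.
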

\begin{proof}
Since each $H_{i}$ is an independent set,  we have $\mu_{k}(H_{i})=0$ for $1\leq i\leq n$ and $1\leq k\leq m_{i}$. Note that for any $u^{*},v^{*}\in V(H_{i^{*}})$, we have     $N_{\overrightarrow{G}}^{-}(u^{*})\setminus \{v^{*}\}= N_{\overrightarrow{G}}^{-}(v^{*})\setminus \{u^{*}\}$, $N_{\overrightarrow{G}}^{+}(u^{*})\setminus \{v^{*}\}= N_{\overrightarrow{G}}^{+}(v^{*})\setminus \{u^{*}\}$.
The permutation interchanging $u^{*}$ and $v^{*}$ and fixing all other vertices is an automorphism of $\overrightarrow{G}$. Hence,
\begin{displaymath}
t_{u^{*}}(\overrightarrow{G})=t_{v^{*}}(\overrightarrow{G}).
\end{displaymath}
Then the result follows from Theorem \ref{x13}.
{\hfill}
\end{proof}


Let $G$ be a digraph with vertex set $V(G)=\{1,2,...,n\}$. Suppose that  the generalized join digraph $\overrightarrow{G}=G[H_{1},H_{2},...,H_{n}]$ has vertex set $V(H_{1})\cup V(H_{2})\cup \cdots \cup V(H_{n})$, where $|V(H_{i})|=m_{i}$ and $V({H_{i}})=\{v_{i}^{1},v_{i}^{2},...,v_{i}^{m_{i}}\}$ for $1\leq i\leq n$. Assume that each digraph $H_{i}$ consists of an oriented  matching and isolated vertices, that is, $H_{i}=a_{i}K_{2}\cup b_{i}K_{1}$ where $ a_{i},b_{i} $ are two nonnegative  integers. Set $|V(H_{i})|=m_{i}=2a_{i}+b_{i}$. Without loss of generality,  let $a_{i}K_{2}=\bigcup_{s=1}^{a_{i}}\{(v_{i}^{2s-1},v_{i}^{2s})\}$ and $b_{i}K_{1}=\{v_{i}^{2a_{i}+1},v_{i}^{2a_{i}+2},...,v_{i}^{2a_{i}+b_{i}}\}$ for $1\leq i\leq n$.

The notations above are kept in the following theorem. Now  we give the enumerative formula of oriented spanning trees with a fixed root  in the generalized join digraph $G[H_{1},H_{2},...,H_{n}]$  when $H_{i}=a_{i}K_{2}\cup b_{i}K_{1}$ for each $i\in V(G)$.

\begin{theorem}\label{x1312}
Let $\overrightarrow{G}=G[H_{1},H_{2},...,H_{n}]$  be a  generalized join digraph with $|V(H_{i})|=m_{i}$ for each $1\leq i\leq n$, where  $H_{i}=a_{i}K_{2}\cup b_{i}K_{1}$. Set $i^{*}\in V(G)$ and $z_{i}(\overrightarrow{G})=\sum_{(i,j)\in E(G)}m_{j}$ for each $i\in V(G)$.

{\em (1)} If  $1\leq p\leq 2a_{i^{*}}$  is odd, then for $v_{i^{*}}^{p}\in V(H_{i^{*}})$,
\begin{displaymath}
\begin{split}
 t_{v_{i^{*}}^{p}}(\overrightarrow{G})=&\frac{z_{i^{*}}(\overrightarrow{G})}{m_{i^{*}}+m_{i^{*}}z_{i^{*}}(\overrightarrow{G})}\prod_{i\in V(G)}z_{i}(\overrightarrow{G})^{m_{i}-a_{i}-1}\left(1+z_{i}(\overrightarrow{G})\right)^{a_{i}}\sum_{T\in \mathcal{T}_{i^{*}}(G)}\prod_{(i,j)\in E(T)}m_{j}.
  \end{split}
\end{displaymath}

{\em (2)} If  $1\leq p\leq 2a_{i^{*}}$  is even, then for $v_{i^{*}}^{p}\in V(H_{i^{*}})$,
\begin{displaymath}
\begin{split}
 t_{v_{i^{*}}^{p}}(\overrightarrow{G})=&\frac{2+z_{i^{*}}(\overrightarrow{G})}{m_{i^{*}}+m_{i^{*}}z_{i^{*}}(\overrightarrow{G})}\prod_{i\in V(G)}z_{i}(\overrightarrow{G})^{m_{i}-a_{i}-1}\left(1+z_{i}(\overrightarrow{G})\right)^{a_{i}}\sum_{T\in \mathcal{T}_{i^{*}}(G)}\prod_{(i,j)\in E(T)}m_{j}.
  \end{split}
\end{displaymath}

{\em (3)} If  $2a_{i^{*}}+1\leq p\leq m_{i^{*}}$, then for $v_{i^{*}}^{p}\in V(H_{i^{*}})$,
\begin{displaymath}
\begin{split}
 t_{v_{i^{*}}^{p}}(\overrightarrow{G})=&\frac{1}{m_{i^{*}}}\prod_{i\in V(G)}z_{i}(\overrightarrow{G})^{m_{i}-a_{i}-1}\left(1+z_{i}(\overrightarrow{G})\right)^{a_{i}}\sum_{T\in \mathcal{T}_{i^{*}}(G)}\prod_{(i,j)\in E(T)}m_{j}.
  \end{split}
\end{displaymath}
\end{theorem}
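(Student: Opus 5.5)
The plan is to combine Theorem \ref{x13}, which gives the \emph{total} number of oriented spanning trees rooted in $H_{i^{*}}$, with Lemma \ref{xsh3}, which gives linear relations between the individual counts $t_{v_{i^{*}}^{p}}(\overrightarrow{G})$. Write $z=z_{i^{*}}(\overrightarrow{G})$, $m=m_{i^{*}}$, $a=a_{i^{*}}$, $b=b_{i^{*}}$, and let $\Sigma=\sum_{T\in \mathcal{T}_{i^{*}}(G)}\prod_{(i,j)\in E(T)}m_{j}$.

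\textbf{Step 1: the total count.} The Laplacian of an oriented $K_{2}$ is $\left(\begin{smallmatrix}1&-1\\0&0\end{smallmatrix}\right)$, with eigenvalues $1$ and $0$. Hence $Spec_{L}(H_{i})$ consists of $a_{i}$ ones and $m_{i}-a_{i}$ zeros. Removing the distinguished eigenvalue $\mu_{m_{i}}(H_{i})=0$ gives
$$\prod_{k=1}^{m_{i}-1}\bigl(\mu_{k}(H_{i})+z_{i}(\overrightarrow{G})\bigr)=z_{i}(\overrightarrow{G})^{m_{i}-a_{i}-1}\bigl(1+z_{i}(\overrightarrow{G})\bigr)^{a_{i}}.$$
Theorem \ref{x13} therefore yields $\sum_{p=1}^{m}t_{v_{i^{*}}^{p}}(\overrightarrow{G})=P\Sigma$, where $P$ is the product appearing in all three formulas.

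\textbf{Step 2: local relations via Lemma \ref{xsh3}.} Let $X=\bigcup_{(j,i^{*})\in E(G)}V(H_{j})$ be the set of in-neighbours of $H_{i^{*}}$ coming from outside. Every vertex of $H_{i^{*}}$ has these in-neighbours, and the even vertex $v_{i^{*}}^{2s}$ additionally has $v_{i^{*}}^{2s-1}$. For out-degrees, $d^{+}=z+1$ for odd vertices with $p\le 2a$, and $d^{+}=z$ for even and isolated vertices. Apply Lemma \ref{xsh3} to the following pairs:
\begin{itemize}
\item[(i)] Two isolated vertices, or two odd vertices: both have in-neighbourhood $X$ and the same out-degree, so their counts are equal.
\item[(ii)] An odd vertex and an isolated vertex $w$: both have in-neighbourhood $X$, giving $(z+1)t_{\rm odd}=z\,t_{w}$.
\item[(iii)] A matched pair $v^{2s-1}\to v^{2s}$: removing each from the other's in-neighbourhood leaves $X$ for both. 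With $x_{v^{2s-1}v^{2s}}=1$, the lemma gives $z\,t_{\rm even}=(z+2)\,t_{\rm odd}$.
\end{itemize}
Consequently all odd counts are equal, all even counts are equal, and all isolated counts are equal.

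\textbf{Step 3: solve.} Introduce a common value $r$ with $t_{\rm odd}=\frac{z}{z+1}r$, $t_{\rm even}=\frac{z+2}{z+1}r$, and $t_{w}=r$. This is consistent with Step 2, and $r$ is defined by the first equation even when $b=0$. Then
$$P\Sigma=a\frac{z}{z+1}r+a\frac{z+2}{z+1}r+br=(2a+b)r=mr,$$
so $r=P\Sigma/m$. This gives case (3) directly, and cases (1) and (2) follow from the expressions for $t_{\rm odd}$ and $t_{\rm even}$.

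\textbf{Main obstacle: the degenerate case $z=0$.} Step 3 divides by $z$, so it fails when $d_{i^{*}}^{+}(G)=0$. I would treat this case separately by direct inspection. If $z=0$, the even and isolated vertices of $H_{i^{*}}$ all have out-degree $0$ in $\overrightarrow{G}$. Whenever $m-a\ge 2$, there are at least two sinks, so every count is $0$; the factor $z^{m-a-1}$ in $P$ also vanishes, so the formulas agree. The only remaining subcase is $m-a=1$, i.e.\ $H_{i^{*}}=K_{1}$ or a single $K_{2}$. There the unique sink must be the root, and the formulas can be checked directly against Step 1 together with the relation in (iii), which still holds. Alternatively, one can avoid division altogether by keeping the relations in multiplied form and arguing that both sides are polynomial identities in the sizes involved.
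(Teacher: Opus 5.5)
Your proposal is correct and follows essentially the same route as the paper: the total over $V(H_{i^{*}})$ from Theorem~\ref{x13} with $Spec_L(H_i)=\{1^{[a_i]},0^{[m_i-a_i]}\}$, the ratios $(1+z)\,t_{\rm odd}=z\,t_{w}$ and $(2+z)\,t_{\rm odd}=z\,t_{\rm even}$ from Lemma~\ref{xsh3}, a linear solve, and a separate direct check of the degenerate case $z=0$. The differences are cosmetic: you get equality among same-type vertices from Lemma~\ref{xsh3} rather than the paper's automorphism argument (for isolated vertices this reads $z\,t_u=z\,t_w$, so it needs your $z\neq 0$ assumption), and your parameter $r$ handles $b_{i^{*}}=0$ and $b_{i^{*}}>0$ in one computation where the paper splits into cases.
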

\begin{proof}
Since each $H_{i}=a_{i}K_{2}\cup b_{i}K_{1}$,  we have $Spec(H_{i})=\{1^{[a_{i}]},0^{[m_{i}-a_{i}]}\}$ for $1\leq i\leq n$, where $\lambda^{[k]}$ means that $\lambda$ is an eigenvalue with algebraic multiplicity $k$.

If $a_{i^{*}}=0$, then all vertices of $H_{i^{*}}$ are symmetric, and assertion {\rm (3)} follows from Theorem \ref{x13}. Hence, assume that $a_{i^{*}}>0$. If $z_{i^{*}}(\overrightarrow{G})=0$, then the result is immediate: when $(a_{i^{*}},b_{i^{*}})=(1,0)$ only the terminal vertex can be a root, and otherwise $t_v(\overrightarrow{G})=0$ for every $v\in V(H_{i^{*}})$. Hence, assume that $z_{i^{*}}(\overrightarrow{G})>0$.

By symmetry, the numbers of oriented spanning trees with roots at the isolated vertices of $H_{i^{*}}$ are equal, and the numbers of oriented spanning trees with roots at the initial vertices of its oriented matching are equal. In particular, for $1\leq p,q\leq 2a_{i^{*}}$ with $p,q$ odd,
\begin{displaymath}\tag{4.6}
t_{v_{i^{*}}^{p}}(\overrightarrow{G})=t_{v_{i^{*}}^{q}}(\overrightarrow{G}).
\end{displaymath}

Similarly, by Lemma \ref{xsh3}, for any $v_{i^{*}}^{p},v_{i^{*}}^{q}\in V(H_{i^{*}})$ with $1\leq p\leq 2a_{i^{*}}$ ($p$ odd) and $2a_{i^{*}}+1\leq q\leq m_{i^{*}}$, we have
\begin{displaymath}\tag{4.7}
\left(1+z_{i^{*}}(\overrightarrow{G})\right)t_{v_{i^{*}}^{p}}(\overrightarrow{G})
=z_{i^{*}}(\overrightarrow{G})t_{v_{i^{*}}^{q}}(\overrightarrow{G}),
\end{displaymath}
and, if $(v_{i^{*}}^{p},v_{i^{*}}^{q})\in E(\overrightarrow{G})$ with $p$ odd and $q$ even, then
\begin{displaymath}\tag{4.8}
\left(2+z_{i^{*}}(\overrightarrow{G})\right)t_{v_{i^{*}}^{p}}(\overrightarrow{G})
=z_{i^{*}}(\overrightarrow{G})t_{v_{i^{*}}^{q}}(\overrightarrow{G}).
\end{displaymath}
By symmetry, the numbers of oriented spanning trees with roots at the terminal vertices of the oriented matching are also equal.

If $b_{i^{*}}=0$, let $t_{1}$ and $t_{2}$ denote the common values of $t_v(\overrightarrow{G})$ for the initial and terminal vertices, respectively. By Equation (4.8) and Theorem \ref{x13},
\[
a_{i^{*}}(t_{1}+t_{2})=
\frac{m_{i^{*}}+m_{i^{*}}z_{i^{*}}(\overrightarrow{G})}{z_{i^{*}}(\overrightarrow{G})}t_{1},
\]
so assertions {\rm (1)} and {\rm (2)} follow, while assertion {\rm (3)} is void. Hence, assume that $b_{i^{*}}>0$.

Let $t_{1}=t_{v_{i^{*}}^{p}}(\overrightarrow{G})$ for $1\leq p\leq 2a_{i^{*}}$ and $p$ is odd, $t_{2}=t_{v_{i^{*}}^{p}}(\overrightarrow{G})$ for $1\leq p\leq 2a_{i^{*}}$ and $p$ is even, and $t_{3}=t_{v_{i^{*}}^{p}}(\overrightarrow{G})$ for $2a_{i^{*}}+1\leq p\leq m_{i^{*}}$. By Equations (4.7) and (4.8),
we obtain
$t_{2}=\frac{2+z_{i^{*}}(\overrightarrow{G})}{z_{i^{*}}(\overrightarrow{G})}t_{1}$ and $t_{3}=\frac{1+z_{i^{*}}(\overrightarrow{G})}{z_{i^{*}}(\overrightarrow{G})}t_{1}$. By Theorem \ref{x13},  we have
\begin{displaymath}
\begin{split}
\sum_{v_{i^{*}}^{p}\in V(H_{i^{*}})} t_{v_{i^{*}}^{p}}(\overrightarrow{G})=&a_{i^{*}}t_{1}+a_{i^{*}}t_{2}+(m_{i^{*}}-2a_{i^{*}})t_{3}
=\frac{m_{i^{*}}+m_{i^{*}}z_{i^{*}}(\overrightarrow{G})}{z_{i^{*}}(\overrightarrow{G})}t_{1}\\
 =&\prod_{i\in V(G)}z_{i}(\overrightarrow{G})^{m_{i}-a_{i}-1}\left(1+z_{i}(\overrightarrow{G})\right)^{a_{i}}\sum_{T\in \mathcal{T}_{i^{*}}(G)}\prod_{(i,j)\in E(T)}m_{j}.
  \end{split}
\end{displaymath}
Thus the result follows.
{\hfill}
\end{proof}

Let $\overrightarrow{G}=G[H_{1},H_{2},...,H_{n}]$  be a  generalized join digraph with $|V(H_{i})|=m_{i}$ for $1\leq i\leq n$.
 Assume that each digraph $H_{i}$ consists of an oriented  star,  that is, $H_{i}=S_{m_{i}}$ in which $v_{i}^{1}$ is the central vertex of $S_{m_{i}}$ and $v_{i}^{2},v_{i}^{3},...,v_{i}^{m_{i}}$ are  pendant vertices of $S_{m_{i}}$ such that $(v_{i}^{p},v_{i}^{1})\in E(S_{m_{i}})$ for each $2\leq p\leq m_{i}$.

The notations above are kept in the following theorem. Now  we give the enumerative formula of oriented spanning trees with a fixed root  in the generalized join digraph $G[S_{m_1},S_{m_2},...,S_{m_n}]$.

\begin{theorem}\label{x1313}
Let $\overrightarrow{G}=G[S_{m_1},S_{m_2},...,S_{m_n}]$  be a  generalized join digraph, where  $S_{m_i}$ is   an oriented star of order $m_{i}$ for $1\leq i\leq n$. Set $i^{*}\in V(G)$ and $z_{i}(\overrightarrow{G})=\sum_{(i,j)\in E(G)}m_{j}$ for each $i\in V(G)$.

{\em (1)} If $p=1$, then for $v_{i^{*}}^{1}\in V(S_{m_{i^{*}}})$,
\begin{displaymath}
\begin{split}
 t_{v_{i^{*}}^{1}}(\overrightarrow{G})=&\frac{m_{i^{*}}+z_{i^{*}}(\overrightarrow{G})}{m_{i^{*}}+m_{i^{*}}z_{i^{*}}(\overrightarrow{G})}\prod_{i\in V(G)}\left(1+z_{i}(\overrightarrow{G})\right)^{(m_{i}-1)}\sum_{T\in \mathcal{T}_{i^{*}}(G)}\prod_{(i,j)\in E(T)}m_{j}.
  \end{split}
\end{displaymath}

{\em (2)} If  $2\leq p\leq m_{i^{*}}$,  then for $v_{i^{*}}^{p}\in V(S_{m_{i^{*}}})$,
\begin{displaymath}
\begin{split}
 t_{v_{i^{*}}^{p}}(\overrightarrow{G})=&\frac{z_{i^{*}}(\overrightarrow{G})}{m_{i^{*}}+m_{i^{*}}z_{i^{*}}(\overrightarrow{G})}\prod_{i\in V(G)}\left(1+z_{i}(\overrightarrow{G})\right)^{(m_{i}-1)}\sum_{T\in \mathcal{T}_{i^{*}}(G)}\prod_{(i,j)\in E(T)}m_{j}.
  \end{split}
\end{displaymath}
\end{theorem}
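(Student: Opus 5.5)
The plan follows the proof of Theorem \ref{x1312}: find the Laplacian spectrum of $S_{m_i}$, use Theorem \ref{x13} for the total over $V(S_{m_{i^*}})$, and use symmetry together with Lemma \ref{xsh3} to split that total among the individual vertices.

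\emph{Step 1: the prefactor.} In $S_{m_i}$ each pendant vertex $v_i^p$ ($p\geq 2$) has out-degree $1$, and the centre has out-degree $0$. Ordering the centre first, $L_{S_{m_i}}$ is triangular with diagonal $(0,1,\dots,1)$. Hence $Spec(L_{S_{m_i}})=\{1^{[m_i-1]},0\}$, so $\mu_k(S_{m_i})=1$ for $1\le k\le m_i-1$. Theorem \ref{x13} then gives
\[
\sum_{p=1}^{m_{i^*}} t_{v_{i^*}^p}(\overrightarrow{G})=\prod_{i\in V(G)}\left(1+z_i(\overrightarrow{G})\right)^{m_i-1}\sum_{T\in\mathcal{T}_{i^*}(G)}\prod_{(i,j)\in E(T)}m_j .
\]

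\emph{Step 2: equal values on pendant vertices.} Any two pendant vertices of $S_{m_{i^*}}$ have identical in- and out-neighbourhoods in $\overrightarrow{G}$, since neither has in-neighbours inside $S_{m_{i^*}}$ and both point to $v_{i^*}^1$. Swapping them is therefore an automorphism of $\overrightarrow{G}$. So $t_{v_{i^*}^p}(\overrightarrow{G})=t_2$ is a common value for $2\le p\le m_{i^*}$; write $t_1=t_{v_{i^*}^1}(\overrightarrow{G})$.

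\emph{Step 3: relating centre and pendant (the main step).} I would apply Lemma \ref{xsh3} to $u^*=v_{i^*}^1$ and $v^*=v_{i^*}^p$ with $p\ge2$. The hypothesis needs checking. The in-neighbours of the centre are all $V(H_j)$ with $(j,i^*)\in E(G)$, plus all pendants of the star. Removing $v^*$ leaves the other $m_{i^*}-2$ pendants, which are \emph{not} in-neighbours of $v^*$. So the hypothesis of Lemma \ref{xsh3} fails when $m_{i^*}\ge3$, and this is the main obstacle.

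The fix is to redo the adjugate argument of Lemma \ref{xsh3} directly. The column-$v^*$ equation of $XL_{\overrightarrow{G}}=0$ gives
\[
(1+z_{i^*})\,t_2=\sum_{w\in N^-(v^*)} t_w=:\Sigma,
\]
where $\Sigma$ sums over the external in-neighbours $w\in V(H_j)$ with $(j,i^*)\in E(G)$. The column-$v_{i^*}^1$ equation gives
\[
z_{i^*}\,t_1=\Sigma+(m_{i^*}-1)\,t_2 .
\]
Subtracting,
\[
z_{i^*}t_1=(1+z_{i^*})t_2+(m_{i^*}-1)t_2=(m_{i^*}+z_{i^*})t_2 .
\]

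\emph{Step 4: solving.} Substituting into the total,
\[
t_1+(m_{i^*}-1)t_2=t_2\,\frac{m_{i^*}+z_{i^*}+(m_{i^*}-1)z_{i^*}}{z_{i^*}}=t_2\,\frac{m_{i^*}+m_{i^*}z_{i^*}}{z_{i^*}} .
\]
This yields $t_2=\frac{z_{i^*}}{m_{i^*}+m_{i^*}z_{i^*}}\cdot(\text{total})$ and $t_1=\frac{m_{i^*}+z_{i^*}}{m_{i^*}+m_{i^*}z_{i^*}}\cdot(\text{total})$, which are assertions (2) and (1).

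\emph{Step 5: the case $z_{i^*}(\overrightarrow{G})=0$.} Here only the centre can be a root, and $t_2=0$ because a pendant root would leave the centre without an out-edge. The centre's formula then reduces to the full total. The case $m_{i^*}=1$ is trivial.
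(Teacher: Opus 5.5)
Your proposal is correct and follows essentially the same route as the paper: Theorem \ref{x13} with $\mu_k(S_{m_i})=1$ for the total, equality of the pendant values (the paper gets this from Lemma \ref{xsh3}, you from the transposition automorphism), and the direct adjugate computation on the columns of $v_{i^*}^1$ and $v_{i^*}^p$ giving $(z_{i^*}+m_{i^*})t_2=z_{i^*}t_1$, which is exactly the paper's (4.11)--(4.13). Your separate treatment of $z_{i^*}=0$ is correct but unnecessary, since the paper writes $t_2=\frac{z_{i^*}}{z_{i^*}+m_{i^*}}t_1$ and so never divides by $z_{i^*}$.
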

\begin{proof}
By Lemma \ref{xsh3},  for any $v_{i^{*}}^{p},v_{i^{*}}^{q}\in V(S_{m_{i^{*}}})$ and $2\leq p,q\leq m_{i^{*}}$, we have
\begin{displaymath}
\left(1+z_{i^{*}}(\overrightarrow{G})\right)t_{v_{i^{*}}^{p}}(\overrightarrow{G})
=
\left(1+z_{i^{*}}(\overrightarrow{G})\right)t_{v_{i^{*}}^{q}}(\overrightarrow{G}).
\end{displaymath}
Hence, $t_{v_{i^{*}}^{p}}(\overrightarrow{G})=t_{v_{i^{*}}^{q}}(\overrightarrow{G})$ for any $v_{i^{*}}^{p},v_{i^{*}}^{q}\in V(S_{m_{i^{*}}})$ and $2\leq p,q\leq m_{i^{*}}$.

 Note that $v_{i^{*}}^{1},v_{i^{*}}^{p}\in V(S_{m_{i^{*}}})$ with $2\leq p\leq m_{i^{*}}$ satisfy $ N_{\overrightarrow{G}}^{-}(v_{i^{*}}^{1})\setminus \{v_{i^{*}}^{2},v_{i^{*}}^{3},...,v_{i^{*}}^{m_{i^{*}}}\}= N_{\overrightarrow{G}}^{-}(v_{i^{*}}^{p})$, then
the Laplacian matrix of $\overrightarrow{G}$ can be written as
$$L_{\overrightarrow{G}}=\left(\begin{matrix}
d_{v_{i^{*}}^{1}}^{+}(\overrightarrow{G}) &0 &\mathbf{0}_{m_{i^{*}}-2}^{\top} &-\alpha^{\top}\\
-1 &d_{v_{i^{*}}^{p}}^{+}(\overrightarrow{G}) &\mathbf{0}_{m_{i^{*}}-2}^{\top} &-\beta^{\top}\\
-\mathrm{j}_{m_{i^{*}}-2} &\mathbf{0}_{m_{i^{*}}-2} &d_{v_{i^{*}}^{p}}^{+}(\overrightarrow{G})I_{m_{i^{*}}-2} &-\mathrm{j}_{m_{i^{*}}-2}\beta^{\top}\\
-\gamma &-\gamma &-\gamma\mathrm{j}_{m_{i^{*}}-2}^{\top} &L_{1}\\
\end{matrix}
\right),$$
where $L_{1}$ is the principal submatrix of $L_{\overrightarrow{G}}$ corresponding to $V(\overrightarrow{G})\setminus \{v_{i^{*}}^{1},v_{i^{*}}^{2},...,v_{i^{*}}^{m_{i^{*}}}\}$, $I_{m_{i^{*}}-2}$ is the identity matrix of order $m_{i^{*}}-2$, $\mathbf{0}_{m_{i^{*}}-2}$ is the all-zero column vector of dimension $m_{i^{*}}-2$, and $\alpha,\beta,\gamma$ are three column vectors indexed by $V(\overrightarrow{G})\setminus \{v_{i^{*}}^{1},v_{i^{*}}^{2},...,v_{i^{*}}^{m_{i^{*}}}\}$ such that, for $v\in V(\overrightarrow{G})\setminus \{v_{i^{*}}^{1},v_{i^{*}}^{2},...,v_{i^{*}}^{m_{i^{*}}}\}$,
\begin{displaymath}
  \alpha_{v}= \begin{cases}
1 &\mbox{if  $v\in N_{\overrightarrow{G}}^{+}(v_{i^{*}}^{1})$},\\
0 &\mbox{otherwise};\\
    \end{cases}
\end{displaymath}

   \begin{displaymath}
   \beta_{v}= \begin{cases}
1 &\mbox{if  $v\in N_{\overrightarrow{G}}^{+}(v_{i^{*}}^{p})\setminus\{v_{i^{*}}^{1}\}$},\\
0 &\mbox{otherwise};\\
   \end{cases}
\end{displaymath}

 \begin{displaymath}
   \gamma_{v}= \begin{cases}
1 &\mbox{if  $v\in N_{\overrightarrow{G}}^{-}(v_{i^{*}}^{1})\setminus \{v_{i^{*}}^{2},v_{i^{*}}^{3},...,v_{i^{*}}^{m_{i^{*}}}\}=N_{\overrightarrow{G}}^{-}(v_{i^{*}}^{p})$},\\
0 &\mbox{otherwise}.\\
   \end{cases}
\end{displaymath}

Let $X$ be the adjoint matrix of $L_{\overrightarrow{G}}$. Then
\begin{displaymath}\tag{4.9}
 XL_{\overrightarrow{G}}=det(L_{\overrightarrow{G}})I=0.
\end{displaymath}
By Lemma \ref{x7}, we get
\begin{displaymath}\tag{4.10}
 (X)_{uv}=t_{v}(\overrightarrow{G}),~{\rm for~any~}u,v\in V(\overrightarrow{G}).
\end{displaymath}
Hence,  by Equations (4.9) and (4.10), for  $v_{i^{*}}^{p}\in V(S_{m_{i^{*}}})$ with $2\leq p\leq m_{i^{*}}$, we have
\begin{displaymath}\tag{4.11}
 \sum_{v\in V(\overrightarrow{G})}t_{v}(\overrightarrow{G})(L_{\overrightarrow{G}})_{vv_{i^{*}}^{1}}=t_{v_{i^{*}}^{1}}(\overrightarrow{G})d_{v_{i^{*}}^{1}}^{+}(\overrightarrow{G})-(m_{i^{*}}-1)t_{v_{i^{*}}^{p}}(\overrightarrow{G})-\sum_{v\in N_{\overrightarrow{G}}^{-}(v_{i^{*}}^{p})}t_{v}(\overrightarrow{G})=0
\end{displaymath}
and
\begin{displaymath}\tag{4.12}
\sum_{v\in V(\overrightarrow{G})}t_{v}(\overrightarrow{G})(L_{\overrightarrow{G}})_{vv_{i^{*}}^{p}}=t_{v_{i^{*}}^{p}}(\overrightarrow{G})d_{v_{i^{*}}^{p}}^{+}(\overrightarrow{G})-\sum_{v\in N_{\overrightarrow{G}}^{-}(v_{i^{*}}^{p})}t_{v}(\overrightarrow{G})=0.
\end{displaymath}
By Equations (4.11) and (4.12), we have
\begin{displaymath}\tag{4.13}
\left(z_{i^{*}}(\overrightarrow{G})+m_{i^{*}}\right)t_{v_{i^{*}}^{p}}(\overrightarrow{G})
=
z_{i^{*}}(\overrightarrow{G})t_{v_{i^{*}}^{1}}(\overrightarrow{G}).
\end{displaymath}

Let $t_{1}=t_{v_{i^{*}}^{1}}(\overrightarrow{G})$ and $t_{2}=t_{v_{i^{*}}^{p}}(\overrightarrow{G})$ for $2\leq p\leq m_{i^{*}}$. By Equation (4.13), we get
$t_{2}=\frac{z_{i^{*}}(\overrightarrow{G})}{z_{i^{*}}(\overrightarrow{G})+m_{i^{*}}}t_{1}$. Note that $Spec(S_{m_{i}})=\{1^{[m_{i}-1]},0\}$ for $1\leq i\leq n$. By Theorem \ref{x13},  we have
\begin{displaymath}
\begin{split}
\sum_{v_{i^{*}}^{q}\in V(S_{m_{i^{*}}})} t_{v_{i^{*}}^{q}}(\overrightarrow{G})=&t_{1}+(m_{i^{*}}-1)t_{2}
=\frac{m_{i^{*}}+m_{i^{*}}z_{i^{*}}(\overrightarrow{G})}{m_{i^{*}}+z_{i^{*}}(\overrightarrow{G})}t_{1}\\
 =&\prod_{i\in V(G)}\left(1+z_{i}(\overrightarrow{G})\right)^{(m_{i}-1)}\sum_{T\in \mathcal{T}_{i^{*}}(G)}\prod_{(i,j)\in E(T)}m_{j}.
  \end{split}
\end{displaymath}
Thus the result follows.
{\hfill}
\end{proof}

 If each $|V(H_{i})|=m_{i}=m$ for $1\leq i\leq n$, then  $z_{i}(\overrightarrow{G})=\sum_{(i,j)\in E(G)}m_{j}=md_{i}^{+}(G)$. By Theorems \ref{x131}, \ref{x1312} and \ref{x1313}, we obtain the following corollaries.
\begin{corollary}\label{x1314}
Let $\overrightarrow{G}=G[H_{1},H_{2},...,H_{n}]$  be a  generalized join digraph, where  $H_{i}$ is an independent set of order $m$. For $u^{*}\in V(H_{i^{*}})$ and $i^{*}\in V(G)$, we have
\begin{displaymath}
\begin{split}
 t_{u^{*}}(\overrightarrow{G})
 =&m^{mn-2}\prod_{i\in V(G)}d_{i}^{+}(G)^{m-1}t_{i^{*}}(G).
  \end{split}
\end{displaymath}
\end{corollary}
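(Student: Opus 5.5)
This is a direct specialization of Theorem \ref{x131}, so the plan is to substitute $m_i=m$ for every $i\in V(G)$ into that formula and simplify. Two quantities need to be rewritten. First, the factor $\sum_{(i,j)\in E(G)}m_j$ becomes $z_i(\overrightarrow{G})=md_i^{+}(G)$, as noted just before the corollary. Second, the tree sum $\sum_{T\in \mathcal{T}_{i^{*}}(G)}\prod_{(i,j)\in E(T)}m_j$ collapses because every oriented spanning tree of $G$ has exactly $n-1$ edges. Each tree therefore contributes $m^{n-1}$, and the sum equals $m^{n-1}t_{i^{*}}(G)$.

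Substituting both into Theorem \ref{x131}, the inner product $\prod_{k=1}^{m-1}$ gives $(md_i^{+}(G))^{m-1}$ for each $i$. Taking the product over $i\in V(G)$ yields $m^{n(m-1)}\prod_{i\in V(G)} d_i^{+}(G)^{m-1}$. Multiplying by the prefactor $\frac{1}{m}$ and by $m^{n-1}t_{i^{*}}(G)$, the total power of $m$ is
\[
n(m-1)+(n-1)-1 = mn-2,
\]
which matches the claimed formula.

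Nothing here is a real obstacle. The only point needing care is that the formula remains valid in the degenerate case $d_{i}^{+}(G)=0$ for some $i$. Both sides then vanish or agree, exactly as in the final paragraph of the proof of Theorem \ref{x13}: if $i\neq i^{*}$, then $t_{i^{*}}(G)=0$; if $i=i^{*}$ and $m\ge 2$, the zero factor appears on both sides. This case is already covered by Theorem \ref{x131}, so nothing further is needed.
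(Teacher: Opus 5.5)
Your proposal is correct and follows the paper's route exactly. The paper obtains this corollary by setting $m_i=m$ in Theorem \ref{x131}, using $\sum_{(i,j)\in E(G)}m_j=md_i^{+}(G)$ and the collapse of the tree sum to $m^{n-1}t_{i^{*}}(G)$, and your exponent count $n(m-1)+(n-1)-1=mn-2$ is right (the degenerate-case remark is harmless but unnecessary, since Theorem \ref{x131} already holds in that case).
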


\begin{corollary}\label{x1315}
Let $\overrightarrow{G}=G[H_{1},H_{2},...,H_{n}]$  be a  generalized join digraph with $|V(H_{i})|=m$ for each $1\leq i\leq n$, where  $H_{i}=a_{i}K_{2}\cup b_{i}K_{1}$. Set $i^{*}\in V(G)$.

{\em (1)} If  $1\leq p\leq 2a_{i^{*}}$  is odd, then for $v_{i^{*}}^{p}\in V(H_{i^{*}})$,
\begin{displaymath}
\begin{split}
 t_{v_{i^{*}}^{p}}(\overrightarrow{G})=&\frac{m^{n-1}d_{i^{*}}^{+}(G)}{1+md_{i^{*}}^{+}(G)}\prod_{i\in V(G)}(md_{i}^{+}(G))^{m-a_{i}-1}\left(1+md_{i}^{+}(G)\right)^{a_{i}}t_{i^{*}}(G).
  \end{split}
\end{displaymath}

{\em (2)} If  $1\leq p\leq 2a_{i^{*}}$   is even, then for $v_{i^{*}}^{p}\in V(H_{i^{*}})$,
\begin{displaymath}
\begin{split}
 t_{v_{i^{*}}^{p}}(\overrightarrow{G})=&\frac{m^{n-2}(2+md_{i^{*}}^{+}(G))}{1+md_{i^{*}}^{+}(G)}\prod_{i\in V(G)}(md_{i}^{+}(G))^{m-a_{i}-1}\left(1+md_{i}^{+}(G)\right)^{a_{i}}t_{i^{*}}(G).
  \end{split}
\end{displaymath}

{\em (3)} If  $2a_{i^{*}}+1\leq p\leq m$, then for $v_{i^{*}}^{p}\in V(H_{i^{*}})$,
\begin{displaymath}
\begin{split}
 t_{v_{i^{*}}^{p}}(\overrightarrow{G})=m^{n-2}\prod_{i\in V(G)}(md_{i}^{+}(G))^{m-a_{i}-1}\left(1+md_{i}^{+}(G)\right)^{a_{i}}t_{i^{*}}(G).
  \end{split}
\end{displaymath}
\end{corollary}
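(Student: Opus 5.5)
This is a direct specialization of Theorem \ref{x1312} to the case $m_{i}=m$ for every $i\in V(G)$, so the plan is substitution and simplification. First I record the two consequences of equal orders. Every out-neighbour $j$ of $i$ contributes $m$, so
\[
z_{i}(\overrightarrow{G})=\sum_{(i,j)\in E(G)}m_{j}=md_{i}^{+}(G).
\]
Every oriented spanning tree of $G$ on $n$ vertices has exactly $n-1$ edges, hence
\[
\sum_{T\in\mathcal{T}_{i^{*}}(G)}\prod_{(i,j)\in E(T)}m_{j}=m^{n-1}t_{i^{*}}(G).
\]
With these, the common factor $\prod_{i}z_{i}^{m_{i}-a_{i}-1}(1+z_{i})^{a_{i}}$ becomes $\prod_{i}(md_{i}^{+}(G))^{m-a_{i}-1}(1+md_{i}^{+}(G))^{a_{i}}$, which already matches the product in the corollary.

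Next I simplify the three prefactors, each multiplied by the extra factor $m^{n-1}$ coming from the tree sum.

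In case (3), $\frac{1}{m}\cdot m^{n-1}=m^{n-2}$, which gives the stated formula.

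In case (1), the prefactor is
\[
\frac{md_{i^{*}}^{+}(G)}{m(1+md_{i^{*}}^{+}(G))}\,m^{n-1}=\frac{m^{n-1}d_{i^{*}}^{+}(G)}{1+md_{i^{*}}^{+}(G)},
\]
which also matches.

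In case (2), the prefactor is
\[
\frac{2+md_{i^{*}}^{+}(G)}{m(1+md_{i^{*}}^{+}(G))}\,m^{n-1}=\frac{m^{n-2}(2+md_{i^{*}}^{+}(G))}{1+md_{i^{*}}^{+}(G)},
\]
again as stated.

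The only real care point is the degenerate situation where $d_{i^{*}}^{+}(G)=0$ (or some $d_i^+(G)=0$), since zero powers and zero denominators can appear. I will check that this matches Theorem \ref{x1312}'s own handling of $z_{i^{*}}=0$ and the paper's convention for $0^{0}$. When $d_{i^{*}}^{+}(G)=0$ the denominator $1+md_{i^{*}}^{+}(G)=1$ is nonzero, so the specialization remains a formal substitution. Beyond that the argument is routine algebra, with no substantive obstacle.
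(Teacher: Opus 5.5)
Your proposal is correct and follows the paper's own route: the paper obtains this corollary by substituting $z_{i}(\overrightarrow{G})=md_{i}^{+}(G)$ into Theorem \ref{x1312}, which is what you do. Your use of $\sum_{T\in\mathcal{T}_{i^{*}}(G)}\prod_{(i,j)\in E(T)}m_{j}=m^{n-1}t_{i^{*}}(G)$ (every oriented spanning tree has $n-1$ edges), together with your simplification of the three prefactors, is exactly the routine algebra the paper leaves implicit, and each of the three cases checks out.
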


\begin{corollary}\label{x1316}
Let $\overrightarrow{G}=G[H_{1},H_{2},...,H_{n}]$  be a  generalized join digraph, where each $H_{i}$ is   an oriented star $S_{m}$. Set $i^{*}\in V(G)$.

{\em (1)} If $p=1$, then for $v_{i^{*}}^{1}\in V(H_{i^{*}})$,
\begin{displaymath}
\begin{split}
 t_{v_{i^{*}}^{1}}(\overrightarrow{G})=&\frac{m^{n-1}(1+d_{i^{*}}^{+}(G))}{1+md_{i^{*}}^{+}(G)}\prod_{i\in V(G)}\left(1+md_{i}^{+}(G)\right)^{(m-1)}t_{i^{*}}(G).
  \end{split}
\end{displaymath}

{\em (2)} If  $2\leq p\leq m$,  then for $v_{i^{*}}^{p}\in V(H_{i^{*}})$,
\begin{displaymath}
\begin{split}
 t_{v_{i^{*}}^{p}}(\overrightarrow{G})=&\frac{m^{n-1}d_{i^{*}}^{+}(G)}{1+md_{i^{*}}^{+}(G)}\prod_{i\in V(G)}\left(1+md_{i}^{+}(G)\right)^{(m-1)}t_{i^{*}}(G).
  \end{split}
\end{displaymath}
\end{corollary}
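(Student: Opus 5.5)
Corollary \ref{x1316} is the case $m_{i}=m$ for all $i$ of Theorem \ref{x1313}, so the plan is to specialise and simplify. Set $m_{i}=m$ for every $i\in V(G)$. Then $z_{i}(\overrightarrow{G})=\sum_{(i,j)\in E(G)}m=md_{i}^{+}(G)$. Since every tree $T\in\mathcal{T}_{i^{*}}(G)$ has exactly $n-1$ edges, we also get
$$\sum_{T\in\mathcal{T}_{i^{*}}(G)}\prod_{(i,j)\in E(T)}m_{j}=m^{n-1}t_{i^{*}}(G).$$
The product factor becomes $\prod_{i\in V(G)}(1+md_{i}^{+}(G))^{m-1}$, which already matches the corollary.

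For part (2), I substitute into Theorem \ref{x1313}(2):
$$\frac{z_{i^{*}}}{m+mz_{i^{*}}}=\frac{md_{i^{*}}^{+}(G)}{m(1+md_{i^{*}}^{+}(G))}=\frac{d_{i^{*}}^{+}(G)}{1+md_{i^{*}}^{+}(G)}.$$
Multiplying by $m^{n-1}t_{i^{*}}(G)$ gives the stated formula.

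For part (1), the same substitution in Theorem \ref{x1313}(1) gives the prefactor
$$\frac{m+md_{i^{*}}^{+}(G)}{m(1+md_{i^{*}}^{+}(G))}=\frac{1+d_{i^{*}}^{+}(G)}{1+md_{i^{*}}^{+}(G)},$$
and multiplying by $m^{n-1}t_{i^{*}}(G)$ again gives the claim. As a consistency check, the two parts should add up to $\sum_{v\in V(H_{i^{*}})}t_{v}(\overrightarrow{G})$ from Theorem \ref{x13}. The weighted prefactors sum to $(1+d_{i^{*}}^{+})+(m-1)d_{i^{*}}^{+}=1+md_{i^{*}}^{+}$, so the total reduces to the Theorem \ref{x13} value, as it should.

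The only point needing care is the degenerate case $z_{i^{*}}(\overrightarrow{G})=0$, i.e.\ $d_{i^{*}}^{+}(G)=0$. I rely on Theorem \ref{x13131} being valid there, since the prefactors remain well defined (the denominator is $m\neq 0$).

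If that case needs separate justification, I would argue directly. Every root must lie in $H_{i^{*}}$, and inside it only the centre $v_{i^{*}}^{1}$ has outdegree $0$. So the pendant vertices get count $0$, consistent with $d_{i^{*}}^{+}=0$ in part (2). The centre receives the full total from Theorem \ref{x13}, consistent with the prefactor $1$ in part (1).
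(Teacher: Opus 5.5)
Your proof is correct and follows the paper's route exactly. The paper obtains the corollary by substituting $m_{i}=m$ into Theorem \ref{x1313}, which gives $z_{i}(\overrightarrow{G})=md_{i}^{+}(G)$ and $\sum_{T\in\mathcal{T}_{i^{*}}(G)}\prod_{(i,j)\in E(T)}m_{j}=m^{n-1}t_{i^{*}}(G)$. Your separate treatment of $d_{i^{*}}^{+}(G)=0$ is harmless but unnecessary: the proof of Theorem \ref{x1313} divides only by $m_{i^{*}}+z_{i^{*}}(\overrightarrow{G})$ and $m_{i^{*}}+m_{i^{*}}z_{i^{*}}(\overrightarrow{G})$, never by $z_{i^{*}}(\overrightarrow{G})$. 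Also, the theorem you cite in that step should be Theorem \ref{x1313}.
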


\section*{Data Availability Statement}
No data is used in this research.

\section*{Conflict of Interest Statement}
The authors have no conflict of interest.


\end{document}